\newcommand{\re}{\mathit{re}}
\newcommand{\U}{\mathit{ue}}
\newtheorem{corollary}{Corollary}
\newtheorem{proposition}{Proposition}
\theoremstyle{remark}
\theoremstyle{definition}
\newtheorem{example}{Example}
\begin{document}

\title[Scheduling and dimensioning of energy stores]{Scheduling and
  dimensioning of heterogeneous energy stores, with applications to
  future GB storage needs}


\author*{\fnm{Stan} \sur{Zachary}}\email{s.zachary@gmail.com}



\affil{\orgdiv{School of Mathematical \& Computer Sciences},
  \orgname{Heriot-Watt
    University},
  \orgaddress{\city{Edinburgh}, \postcode{EH14 4AS}, \country{UK}}}



\abstract{ Future ``net-zero'' electricity systems in which all or
  most generation is renewable may require very high volumes of
  storage in order to manage the associated variability in the
  generation-demand balance.  The physical and economic
  characteristics of storage technologies are such that a mixture of
  technologies is likely to be required.  This poses nontrivial
  problems in storage dimensioning and in real-time management.  We
  develop the mathematics of optimal scheduling for system adequacy,
  and show that, to a good approximation, the problem to be solved at
  each successive point in time reduces to a linear programme with a
  particularly simple solution.  We argue that approximately optimal
  scheduling may be achieved without the need for a running forecast
  of the future generation-demand balance.  We consider an extended
  application to GB storage needs, where savings of tens of billions
  of pounds may be achieved, relative to the use of a single
  technology, and explain why similar savings may be expected
  elsewhere.  }

\keywords{Energy Economics, Decision Processes, Energy Storage, Optimal Scheduling}


\maketitle

\newpage

\textbf{Notation}

\begin{tabular}[ht]{ll}
  $t$      & time (discrete)\\
  $S$      & set of stores\\
  $E_i$    & capacity of store~$i\in S$\\ 
  $P_i$    & maximum input power of store~$i\in S$\\ 
  $Q_i$    & maximum output power of store~$i\in S$\\ 
  $\eta_i$ & round-trip efficiency of store~$i\in S$\\ 
  $s_i(t)$ & level of energy in store~$i\in S$ at time~$t$\\ 
  $s(t)$   & vector of levels $(s_i(t),\, i\in S)$ (i.e.\ state of system at
  time~$t$)\\ 
  $r_i(t)$ & rate at which energy is added to store~$i\in S$ at time~$t$\\ 
  $r(t)$   & vector of rates $(r_i(t),\, i\in S)$\\ 
  $\re(t)$ & residual energy (surplus of generation over demand) at time~$t$\\ 
  $u(t)$   & imbalance at time~$t$ (see equation~\eqref{eq:4})\\ 
  $\U(t)$  & total unserved energy up to time~$t$\\ 
  $V^t(s)$ & value function defined on states $s$ at time~$t$\\ 
  $v^t_i(s)$ & partial derivative of $V^t(s)$ with respect to
               $i^{th}$ component of $s$\\ 
  $\lambda_i$ & scale parameter of $v^t_i(s)$ (see equation~\eqref{eq:10})
\end{tabular}

\bigskip

\textbf{Abbreviations}

\begin{tabular}[ht]{ll}
  ACAES & advanced (adiabatic) compressed air energy storage\\
  GGDDF & greedy greatest-discharge-duration-first policy\\
  GRTEF & greatest-round-trip-efficiency first policy
\end{tabular}

\newpage

\section{Introduction}
\label{sec:introduction}


Future electricity systems in which all or most generation is
renewable, and hence highly variable, may require extremely high
volumes of storage in order to manage this variability and to ensure
that demand may always be met.  Detailed assessments of such needs,
under a future ``net-zero'' carbon emissions strategy and on the
assumption that generation overcapacity is not uneconomically large,
are given for GB
by~\cite{Cardenas2021,RoulstoneCosgrove2022,RoyalSociety2023,RoyalSociety2023_supp},
for Germany by~\cite{Ruhnau2022}, and for the US by~\cite{Shaner2018}.
In each of these cases storage needs to be sufficient to be able to
meet several or many weeks of demand, requiring many tens of
terawatt-hours of storage with capital costs which may run into many
tens, or even hundreds, of billions of (US) dollars.  Similar
conclusions for many other countries may be deduced from the results
of~\cite{Tong2021,CHENM2018,BlancoFaaij2018}.  Further discussion and
references are given by~\cite{CRZ2023}.

In environments in which most generation is renewable, and hence
highly variable, energy may have to be stored over long periods of
time.  In northern European countries, for example, the output of wind
generation varies significantly from year to year
(see~\cite{StaffellPfenninger2016}), necessitating storage of excess
energy in abundant years for use in what would otherwise be lean years
and leading to the high storage capacity requirements referenced
above.  For the reasons we explain below, a mixture of storage
technologies is likely to be required.  The problems considered in the
present paper are those of the scheduling and dimensioning of such
storage, with the objective of meeting energy demand to a required
reliability standard as economically as possible.  In particular, in
the real-time scheduling, or management, of such storage, the
information available for decision-making at each point in time
consists of the current state of system together with some
description, which is at best probabilistic, of the likely evolution
of the future supply and demand processes to be managed by that
storage.  What is \emph{not} available is detailed and precise
\emph{foresight} of the future supply and demand processes.  As we
show in the present paper (Example~\ref{ex:2}), the assumption of such
foresight in considering long-term energy storage may lead to a
considerable underestimation of storage requirements.  The problem of
the \emph{real-time} management of long-term energy
storage---particularly when this utilises multiple technologies---is
only rarely touched upon in the existing, and very large, storage
literature (see below) and it is this gap which the present paper
seeks to address.  The long-term scheduling problems we consider are
nontrivial in that, if storage is not managed properly, then it is
likely that there will frequently arise the situation in which there
is sufficient energy in storage to meet demand but it is located in
too few stores for it to be possible to serve it at the required rate.

The problems we consider are concerned with \emph{system adequacy} and
are thus considered from a \emph{societal} viewpoint, which generally
coincides with that of the electricity system operator.  This is in
contrast
the viewpoint of a storage provider seeking to maximise profits---for
which there exists a substantial literature (see,
e.g.~\cite{SDJW,CZ,DEKM,GTL,PADS,WEITZEL2018582} and the many
references therein).  The \emph{societal} problem of managing energy
systems, as defined above, also has a large associated literature.
However, this is mostly in the context of \emph{short-term} storage
used to cover occasional periods of generation shortfall, often with
sufficient time for recharging between such periods (see, e.g.\
\cite{NGECR,KHAN201839,Sioshansi2014,ZHOU201512}).  Alternatively, the
literature is concerned with the management of microsystems
(see~\cite{Choudhury2022} for a comprehensive review) or with
multi-objective problems~\cite{CDHB2019,CDHB2020}.  In nearly all of
this literature, it appears that foresight (as defined above) is
assumed and the optimal control strategy is determined on this basis.
With relatively short-term problems this may well be reasonable.

Long-term storage is also considered in the existing literature
(see~\cite{MIT2022,Ruhnau2022,Shaner2018,Tong2021}.  However, the
purpose of such studies is generally the determination of overall
storage requirements.  Such studies typically start with one or more
years of supply-demand data; a (mixed-integer) linear programming
approach is then used to simultaneously dimension and schedule
storage.  Frequently this happens via the use of economic capacity
expansion models (see, e.g., \cite{PLEXOS, GenX}).  What is of
interest in such studies is the dimensioning.  The scheduling cannot
be implemented in practice (except in trivial problems) since the
approaches used in such studies again generally assume foresight.

A further disadvantage of those approaches which assume foresight as
above is that the complexity of the numerical computation involved
typically grows far faster than linearly in the length of the data
series used to fit the models.  This means that such studies typically
can only consider data series of a single or very few years, whereas
considerably longer data series are required for the correct
dimensioning of long-term storage in particular---see,
e.g.,~\cite{RoyalSociety2023} for a discussion of this issue.
Further, it is often necessary to use approximation techniques which
consider a succession of timescales.

The urgent need for a solution of these scheduling problems is at
least implicit in many of the above references and is highlighted by
the recent Royal Society report~\cite{RoyalSociety2023} to the UK
government on long-term energy storage.  In the production of that
report, no satisfactory method of long-term scheduling (which did not
assume foresight) was available in the existing literature.  The
mathematics of present paper---along with the alternative approach of
the paper~\cite{CosgroveRoulstone2021}---was developed to fill this
gap and was used as the basis of the scheduling of multiple storage
technologies in the Royal Society supplementary
report~\cite{RoyalSociety2023_supp}.  The latter report also compares
the approach of~\cite{CosgroveRoulstone2021} with that of the present
paper in some detail---see, in particular, Table SI 3.3.  The present
approach results in considerably smaller storage power requirements
than those of~\cite{CosgroveRoulstone2021}, as stores effectively
share their power capabilities---at the occasional expense of higher
capacity requirements.  A further comparison is given by the
paper~\cite{CRZ2023}.


In a large system, such as that of an entire country, the processes of
demand and of renewable generation vary on multiple timescales: on a
timescale measured in hours there is diurnal variation in demand and
in solar generation; on a timescale of days and weeks there is
weather-related variation in demand and in most forms of renewable
generation, and there is further demand variation due to weekends and
holiday periods; on longer timescales there is seasonal variation in
both demand and renewable generation which may extend to major
differences between successive years (see, e.g.,
\cite{Cardenas2021,RoyalSociety2023,Shaner2018,Tong2021,Ruhnau2022}).
Variation in the generation-demand balance may be managed by a number
of different storage technologies.  These vary greatly in their costs
per unit of capacity and per unit of power (the maximum rate at which
they may be charged or discharged), and further in their
\emph{(round-trip) efficiencies} (energy output as a fraction of
energy input).  In consequence, different storage technologies are
typically appropriate to managing variation on these different
timescales---see~\cite{Cardenas2021,RoulstoneCosgrove2022} for some
detailed comparisons and analysis, and also the recent MIT
report~\cite{MIT2022} (especially Figure~1.6) for a discussion of
differing technology costs and their implications.  We further explore
these issues in Section~\ref{sec:appl-gb-energy}.

It thus seems likely that---as previously remarked---in the management
of such future electricity systems, there will be a need for a mix of
storage technologies.  This will be such that most of the required
storage \emph{capacity} will be provided by those technologies such as
chemical storage, which, despite low efficiency and high input-output
(power) costs, are able to provide this capacity most economically,
while a high proportion of the \emph{power} requirements will be met
by technologies such as batteries or advanced (adiabatic) compressed
air energy storage (ACAES) with much higher efficiencies and lower
power costs.  For example, if chemical storage as above were also used
to manage shorter-term variation, necessitating frequent and more
rapid energy input and output, its low efficiency would greatly drive
up its capacity requirement.
(Although there is considerable uncertainty in
future costs, we show that, for the GB case study of
Section~\ref{sec:appl-gb-energy} and on the basis of those costs given
by~\cite{MIT2022}, Figure~1.6, or by~\cite{Cardenas2021,
  RoulstoneCosgrove2022}, it is likely that the use of an appropriate
mixture of storage technologies would result in cost savings of the
order of many billions of pounds, compared with the use of the most
economical single technology.  
Similar results are to be expected for other countries.)

There now arise the questions of how such storage may be economically
dimensioned, and of how it may be managed in real-time, i.e.\ without
\emph{foresight}.  The ability to answer the former question depends
on having a sufficiently good understanding of the answer to the
latter.  The problem of managing, or scheduling, any given set of
stores is that of deciding by how much each individual store should be
charged or discharged at each successive point in time in order to
best manage the generation-demand (im)balance---usually with the
objective of minimising total unmet demand, or \emph{unserved energy},
over some given period of time.
In this context, usually those stores corresponding to any given
technology may be treated as a single store provided their
capacity-to-power ratios are approximately equal (see
Section~\ref{sec:model}).  However, as discussed above, the scheduling
problem is a real-time problem, and in deciding which storage
technology to prioritise at any given point in time, it is difficult
to attempt to classify the current state of the generation-demand
balance as representing short, medium, or long-term variation.  Within
the existing literature,
\cite{CosgroveRoulstone2021} uses a heuristic algorithm to attempt such
a decomposition, while~\cite{Cardenas2021} uses a filtering approach to
choose between medium- and long-term storage.  (Neither of these
approaches allows for cross-charging---see Section~\ref{sec:model}.)


In the present paper the above problem is formulated as one in
mathematical optimisation theory in order to derive policies in which
cooperation between stores happens automatically when this is
beneficial, thereby enabling given generation-demand balance processes
to be managed by storage systems which are considerably more compactly
dimensioned in their power requirements in particular (see
also~\cite{RoyalSociety2023}).  Section~\ref{sec:model} of the paper
defines the relevant mathematical model for the real-time management
of multiple stores in the absence of \emph{foresight}.  This
incorporates capacity and rate (power) constraints, together with
round-trip efficiencies, and allows for entirely general scheduling
policies.  Section~\ref{sec:nature-optim-solut} develops the relevant
mathematics for the identification of optimal policies, when the
objective is the minimisation of cumulative unserved energy to each
successive point in time.  We show that it is sufficient to consider
policies that are \emph{greedy} in an extended sense defined there.
We further show that, at each successive point in time, the scheduling
problem may be characterised as that of maximising a \emph{value
  function} defined on the space of possible energy levels of the
stores, and that
the optimisation problem to be solved at that time is approximately a
(small) linear programme, with a simple, non-iterative, solution.  We
give conditions under which it is possible to find optimal policies,
exact or approximate, from within the class of \emph{non-anticipatory}
policies, i.e.\ those which do not require real-time \emph{foresight}
of the generation-demand balance.
Section~\ref{sec:appl-gb-energy} considers an extended application to
future GB energy storage needs, which
aims to be as realistic as possible.  (Again similar results are to be
expected for many other countries.)  The aims are both to demonstrate
the applicability of the present theory, and further to show how one
might reasonably go about solving the practical problems of
identifying, dimensioning and managing future storage needs.
We demonstrate the general success, and occasional limitations, of
\emph{non-anticipatory} policies as defined above.
The concluding Section~\ref{sec:conclusions} considers some practical
implications of the preceding results.     We also indicate
briefly how the analysis might be extended to include network
constraints, if desired, although we also indicate why these are less
significant in the context of dimensioning long-term storage.

\section{Model}
\label{sec:model}

We study the management over (discrete) time of a set~$S$ of stores,
where each store~$i\in S$ is characterised by four parameters
$(E_i, Q_i, P_i, \eta_i)$ as described below.  For each store
$i\in S$, we let $s_i(0)$ be the initial level of energy in store~$i$
and $s_i(t)$ be the level of energy in store~$i$ at (the end of) each
subsequent time~$t\ge1$.  Without loss of generality and for
simplicity of presentation of the necessary theory, we make the
convention that the level of energy in each store at any time is
measured by that volume of energy that it may ultimately supply, so
that, within the model, any \emph{(round-trip) inefficiency} of the
store is accounted for at the input stage.
While accounting
for such inefficiency is essential to our modelling and results, we
assume that energy, once stored, is not further subject to significant
time-dependent \emph{leakage}.
However, the theory of the present paper would require only minor
adjustments to incorporate such time-dependent leakage.

The successive levels of energy in each store~$i$ satisfy the
recursion
\begin{equation}
  \label{eq:1}
  s_i(t) = s_i(t-1) + r_i(t), \qquad t \ge 1,
\end{equation}
where $r_i(t)$ is the rate (positive or negative) at which energy is
added to the store~$i$ at the time~$t$.  Each store~$i\in S$ is
subject to \emph{capacity constraints}
\begin{equation}
  \label{eq:2}
  0 \le s_i(t) \le E_i,  \qquad t \ge 0,
\end{equation}
so that $E_i>0$ is the \emph{capacity} of store~$i$ (again measured by
the volume of energy it is capable of serving) and \emph{rate
  constraints}
\begin{equation}
  \label{eq:3}
  -P_i \le r_i(t) \le \eta_i Q_i,  \qquad t \ge 1.
\end{equation}
Here $P_i>0$ is the (maximum) \emph{output rate} of the store~$i$,
while $Q_i>0$ is the (maximum) rate at which externally available
energy may be used for \emph{input} to the store, with the resulting
rate at which the store fills being reduced by the \emph{round-trip
  efficiency}~$\eta_i$ of the store, where $0 < \eta_i \le 1$ (so that
the maximum rate at which usable energy may be added to the store is
$\eta_iQ_i$).  (For more general constraints, such as those imposed
by networks, see Section~\ref{sec:conclusions}).  Given the vector
$s(0) = (s_i(0),\,i\in S)$ of the initial levels of energy in the
stores, a \emph{policy} for the subsequent management of the stores is
a specification of the vector of rates~$r(t) = (r_i(t),\,i\in S)$, for
all times~$t\ge1$; equivalently, from~\eqref{eq:1}, it is a
specification of the vector of store levels
$s(t) = (s_i(t),\,i\in S)$, for all times~$t\ge1$.

The stores are used to manage as far as possible a \emph{residual
  energy} (surplus of generation over demand) \emph{process}
$(\re(t),\,t\ge1)$, where, for each time~$t$, a \emph{positive} value
of $\re(t)$ corresponds to surplus energy available for
\emph{charging} the stores, subject to losses due to inefficiency, and
a \emph{negative} value of $\re(t)$ corresponds to energy
\emph{demand} to be met as far as possible from the stores.
For any time~$t$, given the vector of
rates~$r(t) = (r_i(t),\,i\in S)$, define the \emph{imbalance}~$u(t)$
by
\begin{equation}
  \label{eq:4}
   u(t) = \re(t) - \left(\sum_{i\colon r_i(t) < 0} r_i(t)
   + \sum_{i\colon r_i(t) \ge 0} r_i(t)/\eta_i\right).
\end{equation}
The term in parentheses in~\eqref{eq:4} is the net rate at which
energy is input into the stores at time~$t$, as viewed externally,
i.e.\ before losses due to round-trip inefficiency.  We shall require
also that that the policy defined by the rate vectors~$r(t)$,
$t\ge 1$, is such that, at each successive time~$t$,
\begin{equation}
  \label{eq:5}
  \re(t) \ge 0 \quad \Rightarrow \quad u(t) \ge 0,
\end{equation}
so that, at any time~$t$ when there is an energy surplus
($\re(t) \ge 0$), the net energy input into the stores, as defined
above, cannot exceed that surplus; the quantity~$u(t)$ is then the
\emph{spilled energy} at time~$t$.  Similarly, we shall require that
\begin{equation}
  \label{eq:6}
  \re(t) \le 0 \quad \Rightarrow \quad u(t) \le 0,
\end{equation}
so that, at any time~$t$ when there is an energy shortfall
($\re(t) \le 0$), i.e.\ a positive net energy demand to be met from
stores, the net energy output of the the stores does not exceed that
demand;
the quantity~$-u(t)$ is then the \emph{unserved energy} at time~$t$.
(It is not difficult to see that, under any reasonable objective for
the use of the stores to manage the residual energy
process---including the minimisation of total unserved energy as
discussed below---there is nothing to be gained by overserving energy
at times~$t$ such that $\re(t) \le 0$.)
We shall say that a policy is \emph{feasible} for the management of
the stores if, for each~$t\ge1$, that policy satisfies the above
relations~\eqref{eq:1}--\eqref{eq:6}.

For any feasible policy, define the total unserved energy~$\U(t)$ to
any time~$t$ to be the sum of the unserved energies~$-u(t')$ at those
times $t'\le t$ such that $\re(t')\le 0$, i.e.,
\begin{equation}
  \label{eq:7}
  \U(t) =  -\sum_{t'\le t:\, \re(t') \le 0} u(t')
  = \sum_{t'\le t} \max(0, -u(t')),
\end{equation}
where the second equality in~\eqref{eq:7} above follows
from~\eqref{eq:5} and~\eqref{eq:6}.  Our objective is to determine a
feasible policy for the management of the stores so as to minimise the
total unserved energy over some given period of time.  It is possible
that, at any time~$t$, some store~$i$ may be \emph{charging}
($r_i(t) > 0$) while some other store~$j$ is simultaneously
\emph{discharging} ($r_j(t) < 0$).  We refer to this as
\emph{cross-charging}---although the model does not of course identify
the routes taken by individual electrons.  Although, in the presence
of storage inefficiencies, cross-charging is wasteful of energy, it is
nevertheless occasionally effective in enabling a better distribution
of energy among stores and avoiding the situation in which energy may
not be served at a sufficient rate because one or more stores are
empty.

We make also the following observation.  Suppose that some subset~$S'$
of the set of stores~$S$ is such that the stores $i\in S'$ have common
efficiencies $\eta_i$ and common capacity-to-power ratios $E_i/P_i$
and $E_i/Q_i$.  Then, clearly, these stores may be optimally managed
by keeping the fractional storage levels $s_i(t)/E_i$ equal across
$i\in S'$ and over all times~$t$, so that the stores in~$S'$
effectively behave as a single large store with total
capacity~$\sum_{i\in S'}E_i$ and total input and output rates
$\sum_{i\in S'}Q_i$ and $\sum_{i\in S'}P_i$ respectively.  (The reason
for this is that the single large store may notionally be partitioned
as the set $S'$ of smaller stores, and that there is then a one-one
correspondence between feasible policies using the former and those
using the latter.)  This is relevant when, as in the application of
Section~\ref{sec:appl-gb-energy}, we wish to consider the scheduling
and dimensioning of different storage technologies so as to obtain an
optimal mix of the latter.  Then, for this purpose, it is reasonable
to treat---to a good approximation---the storage to be provided by any
one technology as constituting a single large store.

\section{Nature of optimal policies}
\label{sec:nature-optim-solut}

We continue to take as our objective the minimisation of total
unserved energy over some given period of time.  We characterise
desirable properties of policies for the management of storage, and
show how at least approximately optimal policies may be determined.

In applications, the residual energy process to be managed is not
generally known in advance (so ruling out, e.g., the use of
straightforward linear programming approaches) and policies must be
chosen dynamically in response to evolving information about that
process.  Within our discrete-time setting, the information available
for decision-making at any time~$t$ will generally consist of the
vector of store levels $s(t-1) = (s_i(t-1),\,i\in S)$ at the end of
the preceding time period (equivalently the start of the time
period~$t$) together with the current value~$\re(t)$ of the residual
energy process.
However, this information may be supplemented by some, necessarily
probabilistic, prediction (however obtained) of the evolution of the
residual energy process subsequent to time~$t$.
We shall be particularly interested in identifying conditions under
which it is \emph{sufficient} to consider (feasible) policies in which
the decision to be made at any time~$t$, i.e.\ the choice of rates
vector~$r(t)$, depends \emph{only} on~$s(t-1)$ and~$\re(t)$, thereby
avoiding the need for real-time prediction of the future residual
energy process.  Such policies are usually referred to as
\emph{non-anticipatory}, or \emph{without foresight}.

Section~\ref{sec:greedy-policies} below defines \emph{greedy} policies
and shows that
it is sufficient to consider such policies.
Section~\ref{sec:non-pred-polic} discusses conditions under which a
(greedy) optimal, or approximately optimal, policy may be found from
within the class of non-anticipatory policies.
Section~\ref{sec:value-functions} shows that the immediate
optimisation problem to be solved at each successive time~$t$ may be
characterised as that of maximising a \emph{value function} defined on
the space of possible store (energy) levels, and identifies conditions
under which this latter problem is approximately a linear
programme---with a particularly simple, non-iterative, solution.

\subsection{Greedy policies}
\label{sec:greedy-policies}

We define a \emph{greedy} policy to be a feasible policy
in which, at each successive time~$t\ge 1$, and given the levels
$s(t-1)$ of the stores at the end of the preceding time period,
\begin{compactitem}[-]
\item if the residual energy $\re(t)\ge0$, i.e.\ there is energy
  available for charging the stores at time~$t$, then there is no
  possibility to increase any of the rates $r_i(t)$, $i\in S$ (without
  decreasing any of the others), and so further charge the stores,
  while keeping the policy feasible;
\item if the residual energy $\re(t)<0$, i.e.\ there is net energy
  demand at time~$t$, then there is no possibility to decrease any of
  the rates $r_i(t)$, $i\in S$ (without increasing any of the others),
  and so further serve demand, while keeping the policy feasible.
\end{compactitem}
Note that if $\re(t)=0$ at time~$t$, then, for a feasible policy, it
is necessarily the case, from~\eqref{eq:5} and~\eqref{eq:6}, that the
imbalance $u(t)=0$.

Proposition~\ref{proposition:greedy} and its corollary below
generalise a result of~\cite{ESDT} (in that case for a single store
which can only discharge). 

\begin{proposition}
  \label{proposition:greedy}
  Any feasible policy may be modified to be greedy while remaining
  feasible and while continuing to serve as least as much energy to
  each successive time~$t$.  Further, if the original policy is
  non-anticipatory, the modified policy may be taken to be
  non-anticipatory.
\end{proposition}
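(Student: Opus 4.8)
The plan is to prove this by an explicit modification (or ``greedification'') procedure applied at each successive time step, showing that the modification can only improve the cumulative served energy while preserving feasibility, and that it preserves the non-anticipatory property.

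First I would set up the modification as a forward-in-time construction. Given an arbitrary feasible policy with rate vectors $r(t)$, I define a new policy inductively: having defined the modified store levels $s(t-1)$ up to time $t-1$, I specify the modified rates $r(t)$ at time $t$ by the greedy prescription depending on the sign of $\re(t)$. Concretely, when $\re(t) < 0$ (net demand) and the unmodified rates fail to serve all available energy, I set $r_i(t) = -\min(s_i(t-1),P_i)$; when $\re(t) \ge 0$ and spillage would occur, I set $r_i(t) = \min(E_i - s_i(t-1),\eta_i Q_i)$. The key point is that these greedy choices depend only on $s(t-1)$ and $\re(t)$, which immediately gives the non-anticipatory claim in the last sentence: if the original decision rule used only $s(t-1)$ and $\re(t)$, so does the modified one.

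The central step is the inductive invariant. I would prove by induction on $t$ that the modified policy serves at least as much cumulative energy as the original by establishing a \emph{domination} relation between the two trajectories --- the natural candidate being that the modified total served energy $\U$-complement to each time $t$ is at least that of the original, while simultaneously the modified store levels dominate the original in an appropriate (perhaps componentwise or aggregate) sense. The subtlety, and what I expect to be the main obstacle, is that greedily serving more demand now depletes stores and thereby leaves less energy available later, so a naive ``more served now'' argument does not immediately propagate. The resolution is to carry a stronger induction hypothesis coupling the served-energy comparison with a statement that the modified policy never ends up with strictly less total usable energy in a way that matters --- one must track that any extra energy served early is exactly energy that the original policy either spilled or left idle, so no future serving capability is genuinely sacrificed. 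Establishing the correct form of this coupled invariant, and verifying it survives both the charging and discharging cases under the rate and capacity constraints~\eqref{eq:2}--\eqref{eq:3}, is the crux.

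Finally I would verify feasibility of the modified policy directly: the greedy rate choices are, by construction, clipped to satisfy the capacity constraints~\eqref{eq:2} and rate constraints~\eqref{eq:3}, and the sign conditions~\eqref{eq:5}--\eqref{eq:6} hold because greedy serving or charging never overshoots the residual energy $\re(t)$. Assembling these pieces --- feasibility, the cumulative-served-energy domination from the induction, and the preservation of the non-anticipatory property --- completes the argument. I would expect the charging case ($\re(t)\ge0$) to be the easier half, with the discharging case carrying essentially all of the difficulty in propagating the invariant forward.
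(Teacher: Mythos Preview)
Your plan is headed in the right direction and you correctly locate the crux in the discharging case, but it differs structurally from the paper's argument and, as written, leaves two concrete gaps.

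The paper does \emph{not} build the greedy policy in a single forward pass and then compare it directly to the original. It uses an \emph{iterative} modification: for each time~$t$ in succession, it makes the current policy greedy at that single time~$t$, and for all later times $t'>t$ it simply follows the previous policy's rates, clipped to remain feasible --- namely $\hat r_i(t') = \min(r_i(t'),\,E_i-\hat s_i(t'-1))$ when $\re(t)\ge0$ and $\hat r_i(t') = \max(r_i(t'),\,-\hat s_i(t'-1))$ when $\re(t)<0$. Each such one-step modification is shown not to increase cumulative unserved energy to any time; iterating over~$t$ produces the final greedy policy. This decomposition is the key simplification: at every stage you compare two policies that differ by a single greedification-plus-clipping, rather than a fully greedy trajectory against an arbitrary one.

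Your one-shot construction has two specific holes. First, it is underspecified: the greedy condition only pins down the rates when spillage or unserved energy would otherwise occur, so you must say what the modified policy does when that constraint is slack --- and whatever you pick must be feasible for the \emph{modified} store levels $\hat s(t-1)$, which need not agree with the original ones. The paper's ``follow the original, clipped'' rule is exactly what fills this in. Second, you are right that a coupled invariant is needed in the discharging case, but you do not state it. The paper's invariant is
\[
\U(t') - \hat\U(t') \;\ge\; \sum_{i\in S}\bigl(s_i(t') - \hat s_i(t')\bigr),
\]
i.e.\ the gain in cumulative served energy always dominates the total store-level deficit; combined with $\hat s_i(t')\le s_i(t')$ (which also follows by induction from the clipping rule) this yields $\hat\U(t')\le\U(t')$. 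This is precisely the ``stronger induction hypothesis coupling the served-energy comparison with a store-level statement'' you gesture at --- without writing it down, the discharging induction does not close.
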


Proposition~\ref{proposition:greedy} is intuitively appealing: at
those times~$t$ such that $\re(t) \ge 0$, there is no point in
withholding energy which might be used for charging some store, since
the only possible ``benefit'' of doing so would be to allow further
energy---not exceeding the amount originally withheld---to be placed
in that store at a later time.  Similarly, at those times~$t$ such
that $\re(t) < 0$, there is no point in withholding energy in any
store which might be used to reduce unserved energy, since the only
possible ``benefit'' of doing so would be to allow additional
demand---not exceeding that originally withheld by that store---to be
met by that store at a later time.  A formal proof of
Proposition~\ref{proposition:greedy} is given in the Appendix.  Note
that greedy policies may involve cross-charging (see
Section~\ref{sec:model}).  Proposition~\ref{proposition:greedy} has
the following corollary.

\begin{corollary}
  \label{corollary:1}
  Suppose that the objective is the minimisation of unserved energy
  over some given period of time.  Then there is an optimal policy
  which is greedy.  Further, within the class of non-anticipatory
  policies there is a greedy policy which is optimal within this
  class.
\end{corollary}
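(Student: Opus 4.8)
The plan is to derive both assertions directly from Proposition~\ref{proposition:greedy}, which does the substantive work. The key link is that, over any fixed period ending at time~$T$, the total energy demand $\sum_{t'\le T:\,\re(t')\le 0}(-\re(t'))$ is determined by the residual energy process alone and is independent of the policy chosen. At each such time the served energy equals the demand less the unserved energy (one checks from~\eqref{eq:4} that $-\re(t') + u(t') = -\sum_{i:\,r_i(t')<0} r_i(t') - \sum_{i:\,r_i(t')\ge0} r_i(t')/\eta_i$ is exactly the net energy delivered), so the cumulative energy served to each time~$t$ and the cumulative unserved energy~$\U(t)$ sum to this policy-independent total. Hence ``serving at least as much energy to each successive time~$t$'', as guaranteed by Proposition~\ref{proposition:greedy}, is equivalent to having $\U(t)$ no larger at each~$t$; in particular the objective $\U(T)$ is not increased by the greedy modification.

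For the first assertion I would first note that an optimal feasible policy exists. Over the finite horizon the rate vectors $r(t)$, $1\le t\le T$, range over a product of the compact intervals $[-P_i,\eta_iQ_i]$ intersected with the closed feasibility constraints~\eqref{eq:1}--\eqref{eq:6}, so the set of feasible policies is compact and $\U(T)$ is continuous in these rates; the minimum is therefore attained by some feasible policy~$\pi^*$. Applying Proposition~\ref{proposition:greedy} to~$\pi^*$ yields a greedy feasible policy~$\pi^g$ serving at least as much energy to each time, whence $\U_{\pi^g}(T)\le\U_{\pi^*}(T)$ by the equivalence above. As $\pi^*$ is optimal this inequality is an equality, so $\pi^g$ is itself optimal and greedy.

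For the second assertion the argument is identical but carried out within the class of non-anticipatory policies. I would take a policy~$\pi^*$ optimal within this class and appeal to the \emph{second} part of Proposition~\ref{proposition:greedy}: because $\pi^*$ is non-anticipatory, the greedy modification~$\pi^g$ may be chosen non-anticipatory as well. Since $\pi^g$ still serves at least as much energy to each time, $\U_{\pi^g}(T)\le\U_{\pi^*}(T)$, and optimality of~$\pi^*$ within the class forces equality; thus $\pi^g$ is a greedy non-anticipatory policy optimal within that class. When the residual energy process is random and the objective is expected unserved energy, the same conclusion follows by taking expectations, since the modification of Proposition~\ref{proposition:greedy} is pathwise.

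The main obstacle is not the logical structure, which is a short optimality-preservation argument, but the attainment of the optima that the two assertions implicitly invoke. For the unrestricted class this is the compactness point above; for the non-anticipatory class it is more delicate, since a policy is there a decision rule mapping $(s(t-1),\re(t))$ to~$r(t)$ rather than a finite vector, and one must argue that the infimum of (expected) unserved energy over such rules is attained---most cleanly by a backward-induction argument over the finite horizon, which also shows the optimal rule may be taken to depend measurably on the available information. Once existence is secured, the equivalence between serving more energy and leaving less unserved, together with the two parts of Proposition~\ref{proposition:greedy}, delivers the result.
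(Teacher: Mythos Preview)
Your proposal is correct and follows the paper's intended route: the paper gives no separate proof of the corollary, simply stating that it follows from Proposition~\ref{proposition:greedy}, and your argument is exactly the natural unpacking of that implication. Your added care about existence of an optimum (via compactness over the finite horizon, and via backward induction in the non-anticipatory case) and about the equivalence between ``serving at least as much energy'' and ``not increasing $\U(t)$'' goes beyond what the paper makes explicit, but is consistent with it; indeed, the proof of Proposition~\ref{proposition:greedy} in the Appendix already works directly with $\U(t')$ rather than with served energy, so your equivalence step, while correct, is slightly more than is strictly needed.
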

We remark that under objectives other than the minimisation of total
unserved energy, optimal policies may fail to be greedy.  For example,
if unserved energy were costed nonlinearly, or differently at
different times, then at certain times it might be better to retain
stored energy for more profitable use at later times---see, for
example,~\cite{CZ2018}.


\subsection{Non-anticipatory policies}
\label{sec:non-pred-polic}


There are various conditions (see below) under which the optimal
policy may be taken to be not only greedy (see
Proposition~\ref{proposition:greedy}) but also \emph{non-anticipatory}
as defined above.  We are therefore led to consider whether it is
sufficient in applications to consider non-anticipatory policies---at
least to obtain results which are at least approximately optimal, and
to design and dimension storage configurations.  Two such
\emph{non-anticipatory} policies which work well under different
circumstances are:
\begin{compactitem}[-]
\item The \emph{greedy greatest-discharge-duration-first} (GGDDF)
  policy~(see \cite{Nash1978, ETA-pscc, CZ2018, ZTECA}) is a storage
  discharge policy for managing a given residual energy process which
  is negative (i.e.\ there is positive energy demand) over a given
  period of time, with the aim of minimising total unserved energy.
  It is defined by the requirement that, at each time~$t$, stores are
  prioritised for discharging in order of their \emph{residual
    discharge durations}, where the residual discharge duration of a
  store~$i$ at any time is defined as the energy in that store at the
  start of time divided by its maximum discharge rate~$P_i$.
  This non-anticipatory policy is designed to cope with rate
  constraints and to avoid as far as possible the situation in which
  there are times at which there is sufficient total stored energy,
  but this is located in too few stores.  It is optimal among policies
  which do not involve cross-charging, and more generally under the
  conditions discussed in~\cite{ZTECA}.  As also discussed there, it
  may be extended to situations where the residual energy process is
  both positive and negative. 
\item The \emph{greatest-round-trip-efficiency first} (GRTEF) policy
  is a greedy policy which is designed to cope with round-trip
  inefficiency: stores are both charged and discharged---in each case
  to the maximum feasible extent---in decreasing order of their
  efficiencies and no cross-charging takes place.  \emph{In the
    absence of output rate constraints}, the GRTEF policy may be shown
  to be optimal: straightforward coupling arguments, similar to those
  used to prove Proposition~\ref{proposition:greedy}, show that,
  amongst greedy policies, the GRTEF policy maximises the total stored
  energy $\sum_{i\in S} s_i(t)$ at any time, so that energy which may
  be served under any other policy may be served under this policy.  
\end{compactitem}

In practice, a reasonable and robust policy might be to use the GRTEF
policy whenever no store is close to empty, and otherwise to switch to
the GGDDF policy.  However, there is a need to find the right balance
between these two policies, and also to allow for the possibility of
cross-charging where this might be beneficial.
We therefore look more generally at non-anticipatory policies below.

\subsection{Value functions}
\label{sec:value-functions}

Standard dynamic programming theory (see, e.g.\ \cite{Bertsekas2012})
shows that, at any time~$t$, given a stochastic description of the
future evolution of the residual energy process, an optimal decision
at that time may be obtained through the computation of a \emph{value
  function} $V^t(s)$ defined on the set of possible
states~$s = (s_i,\, i\in S)$ of the stores, where
each~$s_i = s_i(t-1)$ is the level of energy in store~$i\in S$ at the
start of time~$t$.  The quantity~$V^t(s)$ may be interpreted as the
future \emph{value} of having the energy levels of the stores in
state~$s$ at time~$t$, relative to their being instead in some other
reference state, e.g.\ state~$0$, where \emph{value} is the negative
of \emph{cost} as measured by expected future unserved energy.  Then
the optimal decision at any time~$t$ is that which maximises the value
of the resulting state, less the cost of any energy unserved at
time~$t$.
In the present problem, such a stochastic description is generally
unavailable.  However, the value function might reasonably be
estimated from a sufficiently long residual energy data
series---typically of at least several years duration---especially if
one is able to assume (approximate) time-homogeneity of the above
stochastic description.  The latter assumption essentially
corresponds, over sufficiently long time periods, to the use of a
value function $V^t(s) = V(s)$ which is independent of time~$t$ and to
the use of a scheduling policy which is approximately non-anticipatory
(see below).

As previously indicated, we make the convention that the state~$s_i$
of each store~$i$ denotes the amount of energy which it is able to
serve---so that (in)efficiency losses are accounted for at the input
stage.  At any time~$t$, and given a stochastic description as above,
the value function~$V^t(s)$ may be computed in terms of absorption
probabilities (see, e.g.\ \cite{Durrett2019probability}).  For
each~$t$, let $v^t_i(s)$ be the partial derivative of the value
function $V^t(s)$ with respect to variation of the level~$s_i$ of each
store~$i\in S$.  Standard probabilistic coupling arguments, analogous
to those used to prove Proposition~\ref{proposition:greedy},
show that, for each $i\in S$, $v^t_i(s)$ lies between~$0$ and~$1$ and
is decreasing is $s_i$.  (For example, the positivity of $v^t_i(s)$
is simply the \emph{monotonicity} property that one can never be worse
off by having more stored energy---see Section~\ref{sec:model}---while
the inequality $v^t_i(s) \le 1$ reflects the fact that having one more
unit of energy in store~$i$ at time~$t$ can at most reduce future
unserved energy by a single unit.)  We assume that changes in store
energy levels are sufficiently small over each single time step~$t$
that changes to the value function may be measured using the above
partial derivatives.  Then the above problem of scheduling the
charging or discharging of the stores over the time step~$t$ becomes
that of choosing \emph{feasible} rates $r(t) = (r_i(t), \, i\in S)$ so
as to maximise
\begin{equation}
  \label{eq:8}
  \sum_{i\in S}v^t_i(s)r_i(t) - \max(0, -u(t)),
\end{equation}
where $s_i = s_i(t-1)$ is again the level of each store~$i\in S$ at
the end of the preceding time step~$t-1$.  This follows from the
characterisation of an optimal policy given at the start of this
section: the first term
in~\eqref{eq:8}) is the increase in the value function at time~$t$
corresponding to the choice of rates~$r(t)$, while the second term is
the unserved energy at that time (see Section~\ref{sec:model}).  
It follows from~\eqref{eq:8} and from the definition~\eqref{eq:4} of
$u(t)$ (coupled with the constraints~\eqref{eq:5} and~\eqref{eq:6})
that, under the above linearisation, the scheduling problem at each
time~$t$ becomes a linear programme.

When, at (the start of) any time~$t$, the state of the stores is given
by~$s = s(t-1)$, we shall say that any store~$i\in S$ has
\emph{charging priority} over any store~$j\in S$ if
$\eta_i v^t_i(s) > \eta_j v^t_j(s)$, and that any store~$i\in S$ has
\emph{discharging priority} over any store~$j\in S$ if
$v^t_i(s) < v^t_j(s)$.  Given the result~\eqref{eq:8},
Proposition~\ref{proposition:lp} below is again intuitively appealing;
we give a formal proof in the Appendix.

\begin{proposition}
  \label{proposition:lp}
  When the objective is the minimisation of total unserved energy over
  some given period of time, then, under the above linearisation, at
  each time~$t$ and with $s=s(t-1)$, the optimal charging, discharging
  and cross-charging decisions are given by the following procedure:
  \begin{compactitem}[-]
  \item \emph{when charging, i.e.\ if $\re(t) \ge 0$}, charge the
    stores in order of their charging priority, charging each
    successive store as far as permitted (the minimum of its input
    rate and its residual capacity) until the energy available for
    charging at time~$t$ is used as far as possible---any remaining
    energy being \emph{spilled};
  \item \emph{when discharging, i.e.\ if $\re(t) < 0$}, discharge the
    stores in order of their discharging priority, discharging each
    successive store as far as permitted (the minimum of its output
    rate and its available stored energy) until the demand at time~$t$
    is met as fully as possible---any remaining demand being
    \emph{unserved energy};
  \item \emph{subsequent to either of the above}, choose pairs of
    stores $(i,j)$ in succession by, at each successive stage,
    selecting store~$i$ to be the store with the highest discharging
    priority which is still able to supply energy at the time~$t$ and
    selecting store~$j$ to be the store with the highest charging
    priority which is still able to accept energy at the time~$t$; for
    each such successive pair $(i,j)$, provided that
    \begin{equation}
      \label{eq:9}
      v^t_i(s) < \eta_j v^t_j(s),
    \end{equation}
    \emph{cross-charge} as much energy as possible from store~$i$ to
    store~$j$.  Note that the above priorities are such that this
    process necessarily terminates on the first occasion such that the
    condition~\eqref{eq:9} fails to be satisfied, and further that no
    cross-charging can occur when $\re(t)\ge0$ and there is spilled
    energy, or when $\re(t)<0$ and there is unserved energy.
  \end{compactitem}
\end{proposition}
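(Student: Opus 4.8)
The plan is to treat~\eqref{eq:8} as the maximisation of a piecewise-linear objective in the rate vector $r(t)=(r_i(t),\,i\in S)$ over the feasible region cut out by the constraints~\eqref{eq:1}--\eqref{eq:6}, and to establish optimality of the stated greedy procedure by a marginal (exchange) argument, in the spirit of the coupling arguments used for Proposition~\ref{proposition:greedy}. First I would remove the piecewise term $\max(0,-u(t))$ by splitting on the sign of $\re(t)$. Writing $C=\sum_{i\colon r_i(t)\ge0}r_i(t)/\eta_i$ for the external energy drawn for charging and $D=\sum_{i\colon r_i(t)<0}(-r_i(t))$ for the servable energy discharged, \eqref{eq:4} gives $u(t)=\re(t)+D-C$, so the charging/discharging status of the stores fixes the linear piece on which we work. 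In the charging case $\re(t)\ge0$, feasibility~\eqref{eq:5} forces $u(t)\ge0$, and the objective reduces to $\sum_i v^t_i(s)r_i(t)$ subject to $C-D\le\re(t)$ and the rate and capacity bounds.

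In the discharging case $\re(t)<0$, feasibility~\eqref{eq:6} makes $\max(0,-u(t))=-u(t)=-\re(t)+C-D$, and after substitution the objective becomes
\begin{equation*}
  \sum_{i\colon r_i(t)\ge0}\bigl(v^t_i(s)-1/\eta_i\bigr)r_i(t)
  +\sum_{i\colon r_i(t)<0}\bigl(1-v^t_i(s)\bigr)\bigl(-r_i(t)\bigr)+\re(t).
\end{equation*}
I would then compute the marginal change in the objective under each elementary move. Charging store~$i$ from the residual by one unit of external energy changes the objective by $\eta_i v^t_i(s)$ (charging case); discharging store~$i$ by one servable unit to meet demand changes it by $1-v^t_i(s)\ge0$ (discharging case); and cross-charging one unit of external energy from~$i$ to~$j$ changes it by $\eta_j v^t_j(s)-v^t_i(s)$ in \emph{either} case, since such a move leaves $C-D$, and hence $u(t)$, unchanged. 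These marginals immediately yield the stated priorities: charge in decreasing order of $\eta_i v^t_i(s)$, discharge in increasing order of $v^t_i(s)$, and cross-charge precisely while~\eqref{eq:9} holds.

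The formal optimality proof then proceeds by exchange: starting from an arbitrary feasible $r(t)$, I would show that whenever the priority order is violated—a lower-priority store is charged (resp.\ discharged) while a higher-priority store still has input capacity (resp.\ stored energy)—a reallocation from the lower- to the higher-priority store weakly increases the objective while preserving feasibility, and that reversing any cross-charge for which~\eqref{eq:9} fails, or inserting one for which it holds, weakly improves it. The stated \emph{ordering} of operations (primary charging or discharging first, cross-charging afterwards) is justified by the two inequalities $\eta_j v^t_j(s)\ge\eta_j v^t_j(s)-v^t_i(s)$ and $1-v^t_i(s)\ge\eta_j v^t_j(s)-v^t_i(s)$, valid because $0\le v^t_i(s)\le1$ and $\eta_j\le1$: filling a store directly from the residual dominates filling it by cross-charge, and discharging a store to meet demand dominates discharging it to cross-charge. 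The final sentence of the proposition is then a consistency check—after primary charging, spilled energy remains only if every store is full or at its input-rate bound, leaving no recipient for a cross-charge, and symmetrically for unserved energy.

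I expect the main obstacle to be the global step: turning the per-move marginal analysis into a proof that the greedy solution is a genuine optimum in the presence of the simultaneously active capacity, rate and imbalance constraints. The delicate point is the decoupling of cross-charging from the primary decisions—one must verify that leaving input capacity idle in a high-priority store in order to reserve it for a later cross-charge can never help, and that the coupled constraint $u(t)\ge0$ (or $u(t)\le0$) admits no beneficial simultaneous adjustment unreachable by a sequence of single elementary moves. Assembling the elementary inequalities above into a complete argument—equivalently, exhibiting dual variables for the relevant linear piece and verifying complementary slackness at the greedy point—is where the real work lies.
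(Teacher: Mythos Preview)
Your marginal computations and exchange inequalities are all correct, and your plan would yield a valid proof. However, the paper takes a somewhat different and shorter route: rather than attacking the full piecewise objective~\eqref{eq:8} directly, it first invokes Proposition~\ref{proposition:greedy} to conclude that the imbalance $u(t)$ may be \emph{fixed} at the value $\hat u(t)$ achieved by the algorithm (this being the greedy, hence optimal, level of spilled or unserved energy). This collapses the term $\max(0,-u(t))$ to a constant and reduces the problem at each~$t$ to maximising the purely linear $\sum_i v^t_i(s)r_i(t)$ over the feasible region intersected with the \emph{equality} constraint $u(t)=\hat u(t)$. The paper then argues from the optimiser side: any maximiser $\bar r(t)$ must satisfy four necessary conditions (charging and discharging respect the stated priorities, the cross-charging condition~\eqref{eq:9} holds across $S_-\times S_+$, and no further profitable cross-charge exists), and these pin down $\bar r(t)=\hat r(t)$ uniquely once ties are broken by perturbation. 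What the paper's reduction buys is exactly the global step you flag as the obstacle: with $u(t)$ fixed, the remaining problem is a genuine linear programme on each sign pattern, so local (exchange) optimality coincides with global optimality without further work. Your approach is more self-contained (it does not need Proposition~\ref{proposition:greedy}) but pays for this by having to establish that the exchange moves suffice for global optimality of the full piecewise problem---which does go through, since the objective is concave in $r(t)$ and the feasible region in the charging case is convex, but this is precisely the ``real work'' you anticipate.
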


The pairing of stores for cross-charging in the above procedure is
entirely notional, and what is important is the policy thus defined.
However, when efficiencies are low, cross-charging occurs
infrequently.

In the examples of Section~\ref{sec:appl-gb-energy}, we consider
\emph{time-homogeneous} value function derivatives of the form
\begin{equation}
  \label{eq:10}
  v^t_i(s) = \exp (-\lambda_i s_i/P_i),
  \quad i \in S,
\end{equation}
essentially corresponding, as above, to the use of non-anticipatory
scheduling algorithms.  (However, data limitations---see the analysis
of Section~\ref{sec:appl-gb-energy}---mean that we use a single,
extremely long, residual energy dataset of 324,360 hourly observations
both to estimate the parameters~$\lambda_i$ and to examine the
effectiveness of the resulting policies.  Hence, the resulting
scheduling algorithms might be regarded as having, at each successive
point in time, some extremely mild anticipation of the future
evolution of the residual energy process.  Within the present
exploratory analysis this approach seems reasonable.)

The expression~\eqref{eq:10} is an approximation, both in its
assumption that, for each $i\in S$, the partial derivative~$v^t_i(s)$
depends only on the state~$s_i$ of store~$i$, and in the assumed
functional form of the dependence of~$v^t_i(s)$ on~$s_i$.  The former
assumption is equivalent to taking the value function as a sum of
separate contributions from each store (a reasonable first
approximation), while probabilistic large deviations
theory~\cite{Durrett2019probability} suggests that, under somewhat
idealised conditions, when the mean residual energy is positive,
the functions~$v^t_i(s)$ do decay exponentially.  However, we
primarily justify the use of the relation~\eqref{eq:10} in part by the
arguments below, and in part by its practical effectiveness---see the
examples of Section~\ref{sec:appl-gb-energy}.  Recall that what are
important are the induced decisions, as described above, on the
storage configuration space.  In particular, when the stores are under
pressure and hence discharging, it follows from the definition of
discharging priority above that it is only the ratios of the
parameters $\lambda_i$ which matter, except only for determining the
extent to which cross-charging should take place.  
Taking $\lambda_i = \lambda$ for all $i$ and for some $\lambda$
defines a policy which, when discharging, corresponds to the use of
the GGDDF policy, supplemented by a degree of cross-charging which
depends on the absolute value of the parameter~$\lambda$.  The
ability to further adjust the relative values of the
parameters~$\lambda_i$ between stores allows further tuning to
reflect their relative efficiencies; in particular, for a given volume
of stored energy, increasing the efficiency of a given store~$i\in S$
increases the desirability of having that energy stored in other
stores and reserving more of the capacity of store~$i$ for future
use---something which can be effected by increasing the
parameter~$\lambda_i$.

\section{Application to GB energy storage needs}
\label{sec:appl-gb-energy}

In this section we give an extended example of the application of the
preceding theory to the problem of dimensioning and scheduling future
GB energy storage needs within a net-zero environment.  Our primary
aim is to illustrate the practical applicability of the theory.
We also aim to show how, given also cost data, it might be used to
assist in storage dimensioning.  We are further concerned with the
extent to which it is sufficient to consider non-anticipatory
scheduling policies (those which do not assume foresight).  We explain
why one might expect to obtain similar conclusions for many other
countries.

A detailed description of the dimensioning problem,
together with details of all our storage, demand and renewable
generation data, including storage costs, is given
by~\cite{RoulstoneCosgrove2022}---work prepared in support of the
Royal Society report~\cite{RoyalSociety2023} on long-term large-scale
energy storage.  The paper~\cite{RoulstoneCosgrove2022} and the
companion paper~\cite{RoulstoneCosgrove2021} use a rather heuristic
scheduling algorithm, which occasionally leads to very high total
power requirements.  Additional discussion is given in the Royal
Society report itself, while the supplementary information for that
report~\cite{RoyalSociety2023_supp}, Section~3.3, discusses the
problem of sharing storage power requirements and compares in detail
the approach of~~\cite{RoulstoneCosgrove2022,RoulstoneCosgrove2021}
with that of the present paper---as also does the
paper~\cite{CRZ2023}.

We consider here a GB 2050 net-zero scenario, also considered
in~\cite{RoulstoneCosgrove2022, RoyalSociety2023}.  In this scenario
heating and transport are decarbonised, in line with the UK's 2050
net-zero commitment, thereby approximately doubling current
electricity demand to 600 TWh per year (see~\cite{UKCCC2019}), and all
electricity generation is renewable and provided by a mixture of 80\%
wind and 20\% solar generation.
We further assume a 30\% level of generation
overcapacity---corresponding to total renewable generation of 780 TWh
per year on average.  The above wind-solar mix and level of generation
overcapacity are those used in~\cite{RoulstoneCosgrove2022,
  RoyalSociety2023}, and are approximately optimal on the basis of the
generation and cost data considered there.  We also consider, very
briefly, the effect on storage dimensioning of a reduced level of
overcapacity of 25\%.

In the application of this section, we depart from our earlier
convention (made for mathematical simplicity) of
notionally accounting for all round-trip inefficiency at the input
stage.  We instead split the round-trip efficiency~$\eta_i$ of any
store~$i$ by taking both the input and output efficiencies to be given
by~$\eta_i^{0.5}$.  This revised convention increases both the
notional volumes of energy within any store~$i$ and the notional
capacity of the store~$i$ by a factor~$\eta_i^{-0.5}$.  This is in
line with most of the applied literature on energy storage needs and
makes our storage capacities below directly comparable with those
given elsewhere.


\paragraph{Generation and demand data.}

We use a dataset consisting of 37 years of hourly ``observations'' of
wind generation, solar generation and demand.  The wind and solar
generation data are both based on the 37-year (1980--2016) reanalysis
weather data of~\cite{StaffellPfenninger2016} together with assumed
installations of wind and solar farms distributed across GB and
appropriate to the above scenario, and with 80\% wind and 20\% solar
generation as above.  The derived generation data are scaled so as to
provide on average the required level of generation overcapacity
relative to the modelled demand.  The demand data are taken from a
year-long hourly demand profile again corresponding to the above 2050
scenario and in which there is 600 TWh of total demand; this profile
was prepared by Imperial College for the UK Committee on Climate
Change~\cite{UKCCC2019}.  As in \cite{RoulstoneCosgrove2022,
  RoyalSociety2023} this year-long set of hourly demand data has been
recycled to provide a 37-year trace to match the generation data.
(This is reasonable here as the between-years variability which may
present challenges to storage dimensioning and scheduling is likely to
arise primarily from the between-years variability in renewable
generation.  However, see also~\cite{GalloCassarino2018}.)
From these data we thus obtain a 37-year hourly \emph{residual energy}
(generation less demand) process to be managed by storage.  For the
chosen base level of 30\% generation overcapacity,
Figure~\ref{fig:residual_energy_30} shows a histogram and
autocorrelation function of the hourly residual energy process.  The
large variation in the residual energy process is to be compared with
the mean demand of 68.6 GW.

\begin{figure}[!ht]
  \centering
  \includegraphics[scale=0.6]{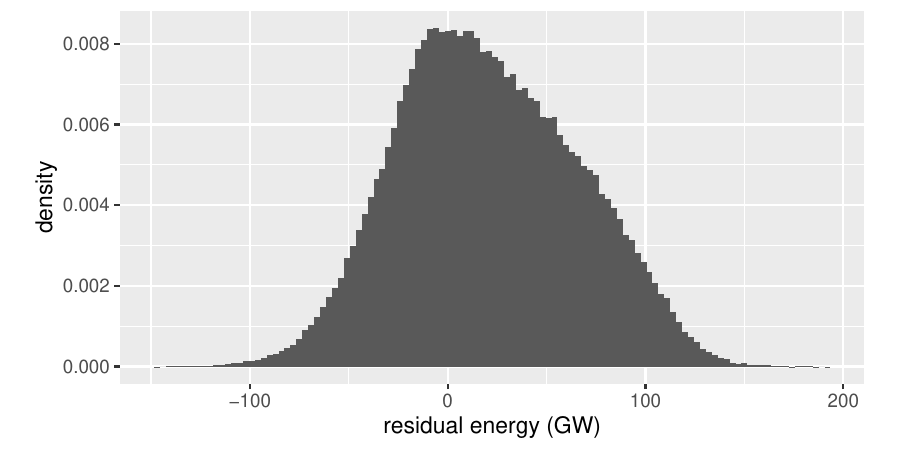}
  \includegraphics[scale=0.6]{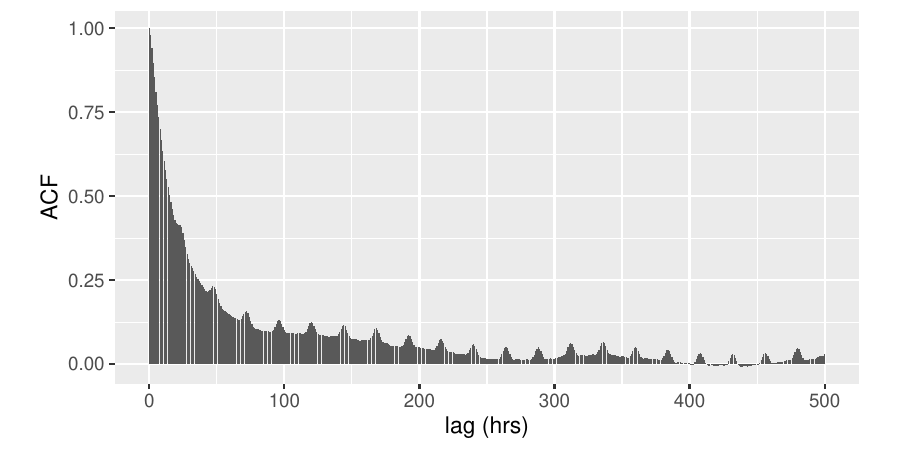}
  \caption{Histogram and autocorrelation function
    of hourly residual energy (30\% overcapacity).}
  \label{fig:residual_energy_30}
\end{figure}

In our examples below, the considered level of generation overcapacity
is 30\%.  However, it is useful to consider briefly the volume of
storage required to manage more general levels of overcapacity.  (Some
level of generation overcapacity is required, both to account for
losses due to inefficiencies in storage, and to keep the required
volume of storage within reasonable bounds.)  In particular, for a
\emph{single} store with given efficiency and without input or output
power constraints, there is a minimum store size and a minimum initial
store energy level such that the store can completely manage the above
residual energy process (i.e.\ with no unmet demand).
Figure~\ref{fig:st.size} plots, for various levels of store efficiency
and on the basis of our assumed 80\%--20\% wind-solar mix, this minimum
store size against the assumed level of overcapacity in the above
residual energy process.


\begin{figure}[ht]
  \centering
  \includegraphics[scale = 0.8]{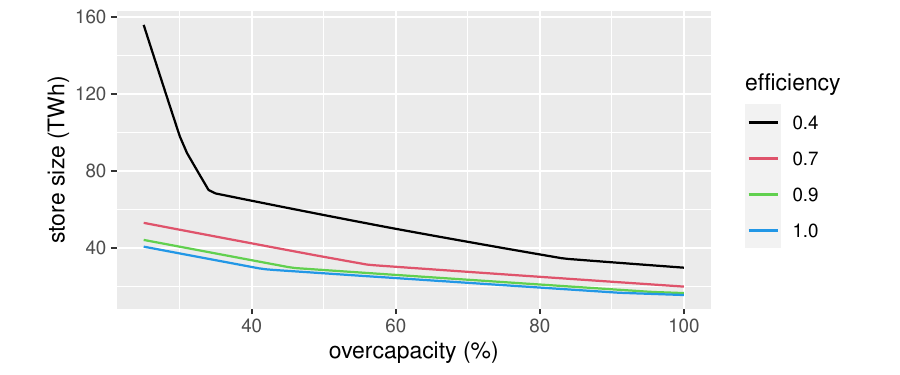}
  \caption{Dependence of minimal store size on level of generation
    overcapacity for various (round-trip) efficiencies.}
  \label{fig:st.size}
\end{figure}


\paragraph{Storage  data and costs.}

As discussed in Section~\ref{sec:introduction} and
in~\cite{RoulstoneCosgrove2022}, we consider three types of storage
with associated efficiencies:
\begin{compactitem}[-]
\item the \emph{short} store is intended primarily for the management
  of diurnal and other short-term variation, and has a low capacity
  requirement
  (see below); it is assumed that it can therefore use a technology
  such as Li-ion battery storage with a high efficiency, which we here
  take to be~$0.9$;
\item the \emph{medium} store is intended primarily for the management
  of weather-related variation on a timescale of days and weeks;
  it has very substantial capacity
  requirements and may require a technology such as ACAES which has a
  lower efficiency, which we here take to be $0.7$;
\item the \emph{long} store is intended for the management of
  seasonal and between-years variation (see
  Section~\ref{sec:introduction});
  it has an very high capacity requirement, and a power requirement
  which---on account of potentially high input/output costs---it is
  desirable to keep relatively modest; it requires a technology, such
  as hydrogen or similar chemical storage, which currently has a low
  efficiency, which we here take to be~$0.4$.
\end{compactitem}

We use storage costs from \cite{RoulstoneCosgrove2022}, Table 3, and
given in Table~\ref{tab:storage_costs} below (with storage capacity
measured according to to the convention of this section with regard to
accounting for inefficiency).  These costs are based on various recent
studies, as reported in~\cite{RoulstoneCosgrove2022}, and are
estimates of likely future storage costs in 2040 if the storage
technologies are applied on a large scale---current costs are
considerably higher.  For Li-ion batteries, the maximum input and
output rates are constrained to be the same, so that power costs may
be associated with input power.  However, there is huge uncertainty as
to future storage costs
(see~\cite{Cardenas2021,MIT2022,RoulstoneCosgrove2022,RoyalSociety2023}
for some discussion).

\begin{table}[ht]
  \centering
  \begin{tabular}{lrrr}
                      & capacity     & output power & input power\\
                      & (\$ per KWh) & (\$ per KW)  & (\$ per KW)\\
    \hline
    \emph{long}   (hydrogen) &        0.8   &         429   &        858\\
    \emph{medium} (ACAES)    &        9.0   &         200   &        200\\
    \emph{short}  (Li-ion)   &      100.0   &           0   &        180
  \end{tabular}
  \caption{Storage costs (US dollars) used for examples.}
  \label{tab:storage_costs}
\end{table}

Unit capacity costs decrease dramatically
as we move from the \emph{short}, to the \emph{medium}, to the
\emph{long} store, while unit power (rate) costs vary, again
considerably, in the opposite direction.
The aim in dimensioning and scheduling storage must therefore be to
arrive at a position in which the \emph{long} store is meeting most of
the total capacity requirement, while as much as is reasonably
possible of the total power requirement is being met by the
\emph{medium} and \emph{short} stores.

We treat GB as a single geographical node, ignoring possible network
constraints.  This is in line with most current studies of GB
\emph{long-term} storage needs, see,
e.g.~\cite{Cardenas2021,RoulstoneCosgrove2022,RoyalSociety2023}, and
with the annual Electricity Capacity Reports produced by the GB system
operator~\cite{NGECR}.  As at present, future network constraints are
unlikely to be continuously binding over periods of time in excess of
a few hours or a day or two at most, and
are primarily likely to affect short-term storage requirements.
However, see Section~\ref{sec:conclusions} for how such constraints
could be included in the present approach.



We take the reliability standard to be given by 24 GWh per year
unserved energy and optimise scheduling and dimensioning subject to to
constraint that this standard is met.  This results in an average
\emph{number of hours} per year in which there is unserved energy
which is roughly in line with the current GB standard of a maximum of
3 such hours per year.  However, modest variation of the chosen
reliability standard
makes very little difference to our conclusions.

Example~\ref{ex:1} below considers a single store.  In the remaining
examples we \emph{schedule} storage using time-homogeneous value
function derivatives $v_i(s)$ given by~\eqref{eq:10}---with~$s$ defined
as there to be the volume of stored energy which may be output, and with
the parameters~$\lambda_i$, $i\in S$, estimated from the data as
described above.  Thus, as previously discussed, the scheduling is
almost completely non-anticipatory.  
We consider also the optimality of the scheduling algorithms used.

We take the stores to be initially full.  However, 
in all our examples, stores fill rapidly regardless of their initial
energy levels, and these levels are in general independent of their
initial values by the end of the first year of the 37-year period
considered.

\begin{example}
  \label{ex:1}
  \emph{Single long (hydrogen) store with efficiency 0.4.}  We first
  consider the management of the residual energy process by a single
  store, optimally dimensioned with respect to cost.  If a single
  store is to be used, then, of the technologies considered here and
  on the basis of the present, \emph{as yet very uncertain}, costs, a
  hydrogen store is the only economic possibility---see
  also~\cite{RoulstoneCosgrove2022,RoyalSociety2023}.

  The unserved energy is clearly a decreasing function of each of the
  store capacity~$E$, the maximum input power~$Q$ and the maximum
  output power~$P$.  For any given value of~$P$, we may thus easily
  minimise the overall cost over $(E, Q)$.  It then turns
  out---unsurprisingly given the stringent reliability standard---that
  the overall cost is here minimised by taking $P$ to be the minimum
  possible value (115.9 GW at the assumed 30\% generation
  overcapacity) such that the given reliability standard of 24 GWh
  unserved energy per year is satisfied.  Table~\ref{tab:sl} shows the
  optimal storage dimensions and associated costs.
  This store capacity is larger than that suggested by
  Figure~\ref{fig:st.size}, where the maximum store input power $Q$
  was unconstrained: on the basis of the present costs, it is more
  economic to reduce $Q$ at the expense of allowing the store
  capacity~$E$ to increase.

  \begin{table}[ht]
    \centering
    \begin{tabular}{rrrrr}
                   & capacity      & output power & input power & total\\
      \hline
      size         & 120.4 TWh     & 115.9 GW     & 80.0 GW     &   \\
      cost (\$ bn) &  96.3 ~~~~~~~ &  49.7 ~~~~~  & 68.6 ~~~~~  & 214.7\\
      \hline                            
    \end{tabular}

   
    \caption{Single \emph{long} (hydrogen) store: dimensions and costs.}
    \label{tab:sl}
  \end{table}

  At a lower level of 25\% generation overcapacity (and at the same
  reliability standard) the total cost of hydrogen storage as above is
  \$257.5 bn.  This is \$42.8 bn greater than that 30\% overcapacity,
  making the 30\% level of overcapacity more economic on the basis of
  the storage and generation costs given
  by~\cite{RoulstoneCosgrove2022}.

  For 30\% generation overcapacity, Figure~\ref{fig:sl.eu} plots
  cumulative unserved energy against time.
  The store never completely empties and so unserved energy occurs
  only at those times at which the output power $P$ of the store is
  insufficient to serve demand.

  \begin{figure}[ht]
    \centering
    \includegraphics[scale=0.7]{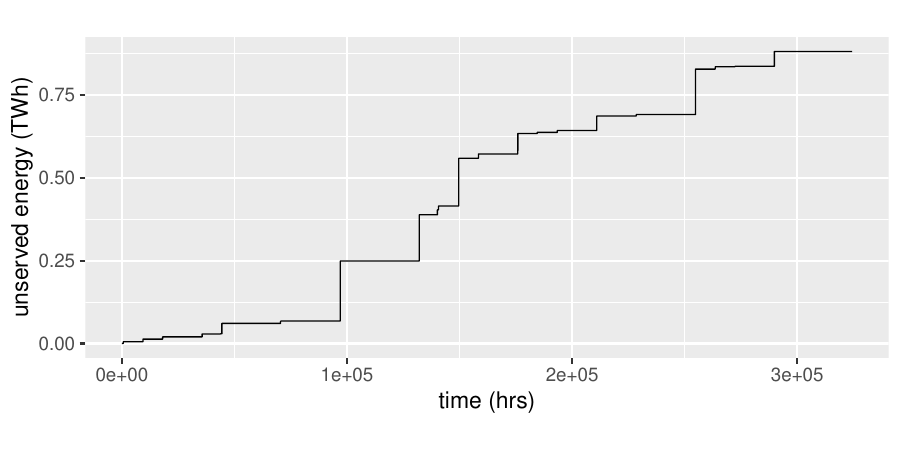}
    \caption{Example~\ref{ex:1}: single \emph{long} (hydrogen) store:
      cumulative unserved energy.}
    \label{fig:sl.eu}
  \end{figure}

  Figure~\ref{fig:sl.sl} shows the corresponding processes formed by
  the successive energy levels within the store.  A substantial
  fraction of the store capacity is needed solely to manage the single
  period of large shortfall in the residual energy process occurring
  at around 275,000 hours into the 37-yr (324,360 hour) period
  studied.  This underlines the importance of using a residual energy
  time-series which is sufficiently long to capture those events such
  as sustained wind droughts which only occur perhaps once every few
  decades---see also~\cite{RoyalSociety2023}.

  \begin{figure}[ht]
    \centering
    \includegraphics[scale=0.7]{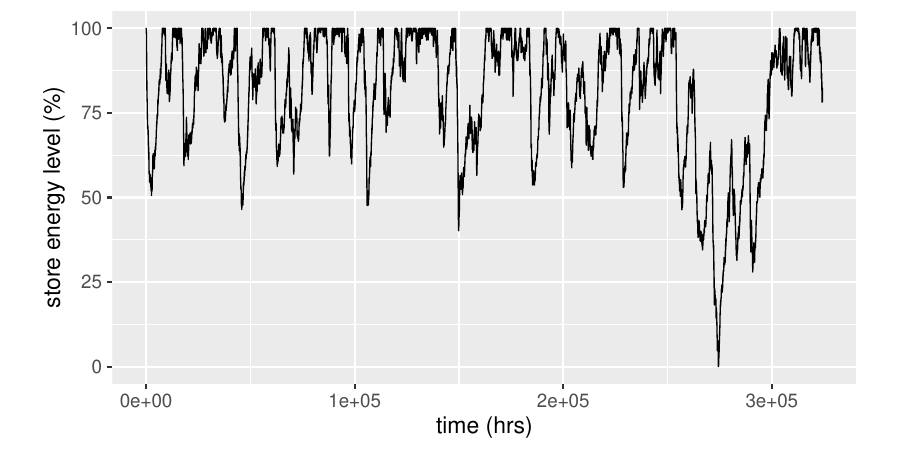}
    \caption{Example~\ref{ex:1}: single \emph{long} (hydrogen) store:
       successive store energy levels.}
    \label{fig:sl.sl}
  \end{figure}
  

\end{example}

\begin{example}
  \label{ex:2}
  \emph{Long (hydrogen) store with efficiency 0.4 plus medium (ACAES)
    store with efficiency 0.7.}  In this example we show that, again
  on the basis of the cost data used here and the considered level of
  generation overcapacity, extremely large savings (of the order of
  tens of billions of dollars) are to be made by the use of a suitable
  mixture of storage technologies.

  We choose \emph{medium} (ACAES) store dimensions as below:
  some numerical experimentation shows these to be at least close to
  optimal with respect to overall cost minimisation.  Then, given
  these \emph{medium} store dimensions, and subject to the given
  reliability standard of 24 GWh unserved energy per year, the
  \emph{long} (hydrogen) store may be optimally dimensioned---given
  the use of the value-function based scheduling algorithm, and again
  to a very good approximation---as previously.  Table~\ref{tab:slm}
  shows the optimal storage dimensions and associated costs (again for
  the assumed 30\% generation overcapacity).  Note that
  the combined output power of the two stores is only slightly greater
  than that of the single store of Example~\ref{ex:1}, so that the two
  stores are effectively cooperating in meeting the total power
  requirement.
  
  \begin{table}[ht]
    \centering

    
    \begin{tabular}{l|r|rrrr}
      \multicolumn{2}{c}{}
                  & capacity  & output power & input power & total cost\\[2pt]
      \hline
      \emph{long} & size   &  72.8 TWh     &  96.2 GW     & 53.3 GW    &   \\
      store & cost (\$ bn) &  58.2 ~~~~~~~ &  41.3 ~~~~~  & 45.7 ~~~~~ & 145.2\\
      \hline
       \emph{medium} & size &  2.5 TWh     &  21.0 GW     & 21.1 GW    &  \\
      store & cost (\$ bn) &  22.5 ~~~~~~~ &   4.2 ~~~~~  &  4.2 ~~~~~ & 30.9\\
      \hline
      \multicolumn{2}{r}{Total cost (\$ bn)} &&&& \rule{0pt}{3ex} 176.2
    \end{tabular}
    
   

    
    \caption{\emph{Long} (hydrogen) store plus \emph{medium} (ACAES)
      store: dimensions and costs.}
    \label{tab:slm}
  \end{table}

  The reason for the very large costs savings of \$38.5 bn, relative
  to the use of a single storage technology,
  is as follows.  The low efficiency (0.4) of the \emph{long}
  hydrogen store means that, when used on its own, its capacity is
  necessarily much greater than would have been the case had its
  efficiency been higher---see also Figure~\ref{fig:st.size}.  The
  greater efficiency (0.7) of the very much smaller \emph{medium}
  ACAES store introduced in this example allows it to be used to
  cycle rapidly---see Figure~\ref{fig:slm.sl}---serving a
  disproportionate share of the demand in relation to its capacity,
  and thereby \emph{greatly} reducing the capacity requirement for the
  \emph{long} store.  At lower levels of generation overcapacity,
  storage efficiency becomes even more important (for example, at 25\%
  overcapacity, cost savings of \$53.3 bn may achieved by the
  introduction of the \emph{medium} ACAES store).  We observe also
  that above explanation for the large cost savings to be achieved by
  the use of a mix of technologies, relative to the use of either on
  its own, is equally applicable to other systems where there is
  variation on multiple timescales.

  The parameters $\lambda_i$ of the scheduling algorithm
  (equation~\eqref{eq:10}) are given by
  $(\lambda_l, \lambda_m) = (0.0011, 0.01)$ per hour.
  The annual unserved energy just meets the required reliability
  standard.  The average annual volumes of
  energy served externally, i.e.\ to meet demand, by the \emph{long}
  and \emph{medium} stores are 47.6 TWh and 35.9 TWh
  respectively---with,
  in this example, negligible extra energy being used for
  cross-charging.  Thus
  the much smaller \emph{medium} store serves a comparable volume of
  energy to the \emph{long} store.

  Figure~\ref{fig:slm.eu} plots cumulative unserved energy (here
  averaging 23.9 GWh per year) against time, together with the
  corresponding process in which there is only unserved energy to the
  extent that demand exceeds the combined output power (117.2 GW) of
  the two stores; this latter process provides a lower bound (20.4
  GWh per year or 754 GWh over the entire 37-year period) on the
  unserved energy achievable.
  There is thus only one significant occasion (at around 150,000
  hours) on which, for the original fully constrained storage system,
  there is unserved energy over and above that forced by the power
  constraint; this is the result of the \emph{medium} store emptying
  and the \emph{long} store then being unable on its own to serve
  energy at the required rate.  The question now arises as to whether
  different (anticipatory) management of the stores, in the period
  immediately preceding this occasion, could have avoided this.  The
  linear programming solution to the unserved-energy minimisation
  problem defined in Section~\ref{sec:model} does, in this example,
  find such a policy, but this solution requires advance knowledge
  (foresight) of the residual energy process over the entire 37-year
  time period considered, and so does not provide a realistic
  practical approach.  However, it is clear that, under the
  essentially non-anticipatory policy found by the present algorithm,
  the stores are very close to being optimally controlled.

  \begin{figure}[!ht]
    \centering
    \includegraphics[scale = 0.8]{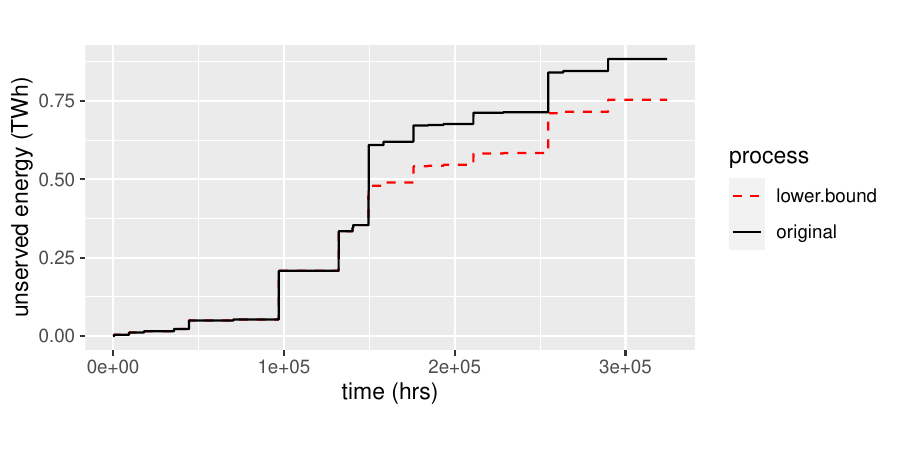}
    \vspace{-2ex}
    \caption{Example~\ref{ex:2}: plot of cumulative unserved energy
      against time (black) together with lower bounding process
      (red).}
    \label{fig:slm.eu}
  \end{figure}
 
  Figure~\ref{fig:slm.sl} plots the percentage levels of energy in
  store during a two-year period, starting at time 265,000 hours and
  surrounding the one point in time at which the \emph{long} store
  comes very close to emptying.
  The \emph{medium} store cycles rapidly, thereby using its higher
  efficiency to greatly reduce the capacity and input rate
  requirements on the \emph{long} store.  It nevertheless generally
  reserves about half its capacity so that it is available to assist
  in any ``emergency'' in which the demand exceeds the output power of
  the \emph{long} store alone.  The exception to this occurs at those
  times when the \emph{long} store is itself close to emptying, and
  when the \emph{medium} store must therefore work harder to further
  relieve the pressure on the \emph{long} store.

  \begin{figure}[!ht]
    \centering
    \includegraphics[scale = 0.8]{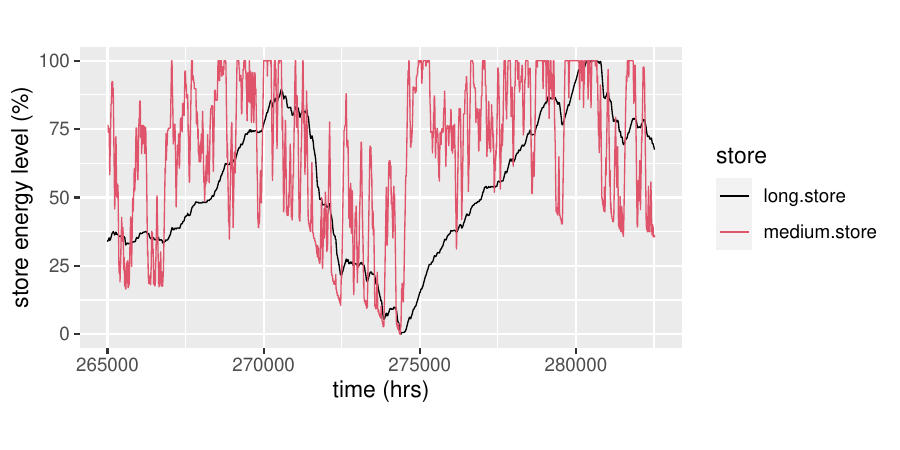}
    \vspace{-2ex}
    \caption{Example~\ref{ex:2}, 30\% generation overcapacity: plot of
      store levels (\%) against time.}
    \label{fig:slm.sl}
  \end{figure}
  
  The present example may also be used to further show the importance
  of \emph{not assuming foresight}---i.e.\ the importance of using
  non-anticipatory policies---in the \emph{dimensioning} of multiple
  storage types.  If foresight \emph{were} assumed, so that the
  scheduling could be done using a linear programming approach as
  above, then it turns out that it would be possible to reduce the
  capacity of the \emph{medium} store from 2.5 TWh to 1.29 TWh (at a
  cost saving of \$10.9 bn) while still continuing to meet all demand
  except that which is in excess of the combined output power (117.2
  GW) of the two stores.  In particular, the chosen reliability
  standard would again be comfortably met.  The reason why, under the
  assumption of foresight, the capacity of the \emph{medium} store may
  be nearly halved, is that the \emph{medium} store may then use
  \emph{nearly all} its capacity for cycling to reduce the input and
  capacity requirements on the \emph{long} store; on the rare
  occasions when it is anticipated that the power output capability of
  the \emph{long} store will need to be supplemented, the
  \emph{medium} store may reduce its cycling sufficiently far in
  advance so as to hold the necessary capacity in reserve.
\end{example}

\begin{example}
  \label{ex:3}
  \emph{Long (hydrogen) store with efficiency~0.4 plus short (Li-ion)
    store with efficiency~0.9.}  In the context of long-term GB
  storage needs, a necessarily relatively small \emph{short} store
  (Li-ion battery) can probably only make a relatively modest
  contribution.  Analogously to Example~\ref{ex:2}, we here explore
  the extent to which it is possible for it to assist in the provision
  of storage mostly provided by a \emph{long} (hydrogen) store.

  As in Example~\ref{ex:2}, we choose \emph{short} (Li-ion) store
  dimensions which (with some experimentation) appear to work well
  with respect to overall cost minimisation, subject here to equal
  input and output power ratings---see the discussion above.
  Given the \emph{short} store dimensions, and subject to the given
  reliability standard of 24 GWh per year, the \emph{long} (hydrogen)
  store may again be optimally dimensioned as in Example~\ref{ex:2}.


  Table~\ref{tab:sls} shows storage dimensions and associated costs
  (for 30\% generation overcapacity).  These results are to be
  compared with those of Table~\ref{tab:sl}.  What is remarkable is
  that
  a very large reduction in the capacity of the \emph{long} (hydrogen)
  store is achieved through the introduction of a \emph{short}
  (Li-ion) store of \emph{very} small capacity.  This is again
  primarily achieved through constant rapid cycling by the
  \emph{short} store so as to exploit its much greater
  efficiency---see Figure~\ref{fig:sls.sl} below.  The total cost
  saving of \$6.3 bn is similarly noteworthy.

  \begin{table}[ht]
    \centering

    
    \begin{tabular}{l|r|rrrr}
      \multicolumn{2}{c}{}
                  & capacity  & output power & input power & total cost\\[2pt]
      \hline
      \emph{long} & size   & 101.2~~~ TWh     & 115.9 GW     & 77.5 GW    &   \\
      store & cost (\$ bn) &  81.0 ~~~~~~~~~~ &  49.7 ~~~~~  & 66.5 ~~~~~ & 197.2\\
      \hline
       \emph{short} & size &  0.085 TWh      &  15.0 GW     & 15.0 GW    &  \\
      store & cost (\$ bn) &  8.5 ~~~~~~~~~~ &   0.0 ~~~~~  &  2.7 ~~~~~ & 11.2\\
      \hline
      \multicolumn{2}{r}{Total cost (\$ bn)} &&&& \rule{0pt}{3ex} 208.4
    \end{tabular}
    
   

    
    \caption{\emph{Long} (hydrogen) store plus \emph{short} (Li-ion)
      store: dimensions and costs.}
    \label{tab:sls}
  \end{table}

  The parameters $\lambda_i$ of the scheduling algorithm
  (equation~\eqref{eq:10}) are given by
  $(\lambda_l, \lambda_s) = (0.000001, 0.1)$ per hour.
  The average annual volumes of energy served externally by the
  \emph{long} and \emph{short} stores are 73.8 TWh and 9.8 TWh
  respectively---again with the given reliability standard just being
  met and
  with negligible extra energy being used for cross-charging.  The
  linear programming solution to the unserved-energy minimisation
  problem defined in Section~\ref{sec:model} finds an absolute minimum
  unserved energy of 21.4 GWh per year, or 791 GWh over the entire
  37-year period, but again this solution requires advance knowledge
  of the residual energy process over all time.  Thus again the
  essentially non-anticipatory policy of the present algorithm finds a
  control which is very close to optimal.

  Figure~\ref{fig:sls.sl} plots the percentage levels of energy in
  store during the same two-year period considered in
  Example~\ref{ex:2}.
  It is seen that the \emph{short} (Li-ion) store here
  devotes \emph{all} its capacity  to cycling rapidly,
  using its higher efficiency to \emph{greatly} reduce the capacity
  and, to a lesser extent, the input power requirements for the
  \emph{long} store.  The capacity costs of the \emph{short} store are
  such that it is not worth further increasing its capacity so as to
  reserve energy to enable the reduction of the \emph{output power}
  requirement of the \emph{long} store.  Hence the pattern of usage of
  the \emph{short} store is here different from that of the
  \emph{medium} store in the previous example.

  \begin{figure}[ht]
    \centering
    \includegraphics[scale = 0.8]{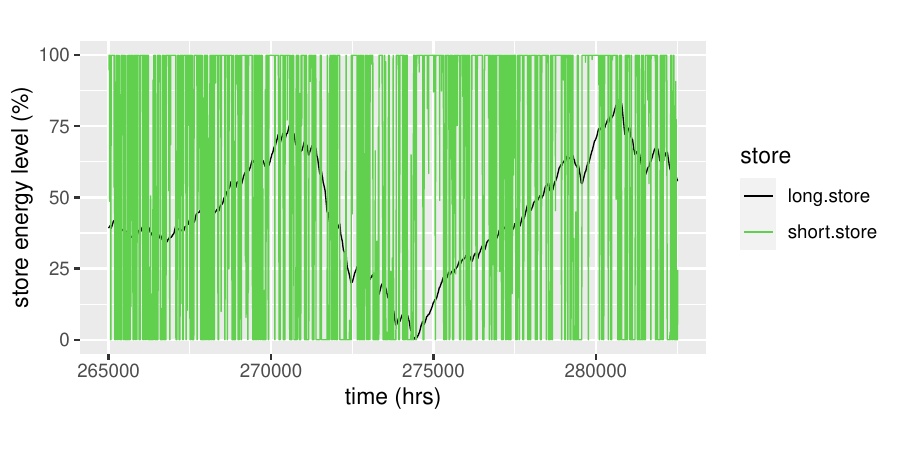}
    \vspace{-3ex}
    \caption{Example~\ref{ex:3}, 30\% generation overcapacity: plot of
      store levels (\%) against time.}
    \label{fig:sls.sl}
  \end{figure}
  
\end{example}

\begin{example}
  \label{ex:4}
  \emph{Long (hydrogen) store with efficiency~0.4 plus medium (ACAES)
    store with efficiency~0.7 plus short (Li-ion) store with
    efficiency~0.9.}
  
  In this final example we take the set-up of Example~\ref{ex:2},
  i.e.\ \emph{long} (hydrogen) store plus \emph{medium} (ACAES) store,
  and consider whether any further overall cost reduction can be
  obtained by the addition of a \emph{short} (Li-ion) store.
  For the assumed 30\% generation overcapacity and the given
  reliability standard, some experimentation shows that the storage
  dimensions and associated costs given in Table~\ref{tab:slms} are at
  least approximately optimal and lead to a modest cost
  reduction---relative to Example~\ref{ex:2}---of \$0.34 bn.
  Here the \emph{short} store is relatively very small indeed;
  however, variation of its dimensions does not seem to assist in
  further reducing overall costs.  Thus, of our four examples and on
  the basis of the present costs, the present three-store mix appears
  to be the most economical.

    \begin{table}[ht]
    \centering

    
    \begin{tabular}{l|r|rrrr}
      \multicolumn{2}{c}{}
                  & capacity  & output power & input power & total cost\\[2pt]
      \hline
      \emph{long} & size   &  72.2~~ TWh     &  96.2 GW     & 53.3 GW    &   \\
      store & cost (\$ bn) &  57.8 ~~~~~~~~~ &  41.3 ~~~~~  & 45.7 ~~~~~ & 144.8\\
      \hline
       \emph{medium} & size &  2.44~ TWh     &  21.0 GW     & 21.1 GW    &  \\
      store & cost (\$ bn) &  22.0 ~~~~~~~~~ &   4.2 ~~~~~  &  4.2 ~~~~~ & 30.4\\
      \hline
       \emph{short} & size &  0.005 TWh      &   2.0 GW     &  2.0 GW    &  \\
      store & cost (\$ bn) &  0.5 ~~~~~~~~~~ &   0.0 ~~~~~  &  0.2 ~~~~~ & 0.7\\
      \hline
      \multicolumn{2}{r}{Total cost (\$ bn)} &&&& \rule{0pt}{3ex} 175.8
    \end{tabular}
    
   

    
    \caption{\emph{Long} (hydrogen) store plus \emph{medium} (ACAES)
      store plus \emph{short} (Li-ion) store : dimensions and costs.}
    \label{tab:slms}
  \end{table}

  The parameters $\lambda_i$ of the scheduling algorithm
  (equation~\eqref{eq:10}) are given by
  $(\lambda_l, \lambda_m, \lambda_s) = (0.001, 0.011, 0.035)$ per hour.
  The annual volumes of energy served externally by the \emph{long},
  \emph{medium} and \emph{short} stores are 47.2 TWh, 36.4 TWh and
  0.012 TWh respectively,
  again with negligible extra energy being used for cross-charging.
  The linear programming solution to the unserved-energy minimisation
  problem defined in Section~\ref{sec:model} finds an absolute minimum
  of 17.9 GWh unserved energy per year, or 664 GWh over the entire
  37-year period,
  so that, as in previous examples, the non-anticipatory policy of the
  present algorithm finds a control which is reasonably close to
  optimal.
  

  Figure~\ref{fig:slms.sl} plots the percentage levels of energy in
  store during the same two-year period considered in
  Examples~\ref{ex:2} and~\ref{ex:3}.  The behaviour of the
  \emph{long} and \emph{medium} store processes is, unsurprisingly,
  essentially as in Example~\ref{ex:2} (Figure~\ref{fig:slm.sl}).  The
  behaviour of the \emph{short} store is here interesting.  For most
  of the time it remains full, reserving its energy for those
  occasions on which it may be called on to act in an emergency.
  However, as the \emph{long} and \emph{medium} stores come close to
  being empty, the \emph{short} store cycles as rapidly as
  possible---essentially in an attempt to prevent the former two
  stores actually emptying.

  \begin{figure}[!ht]
    \centering
    \includegraphics[scale = 0.8]{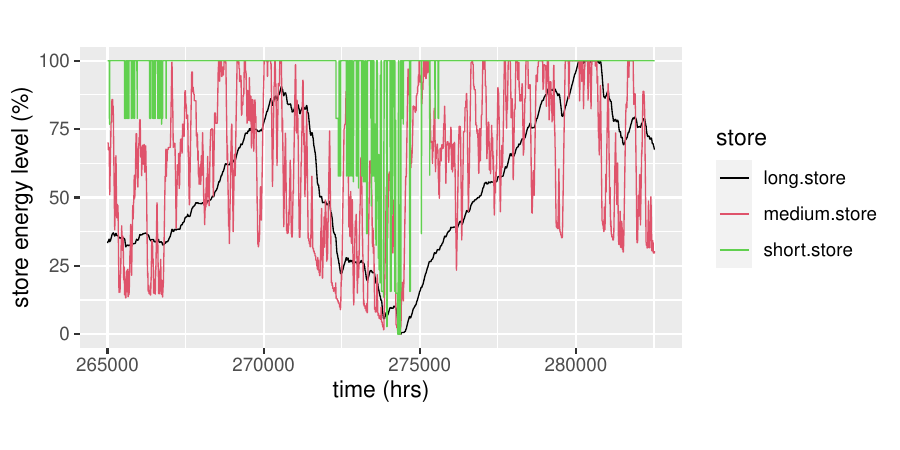}
    \vspace{-2ex}
    \caption{Example~\ref{ex:4}, 30\% generation overcapacity: plot of
      store levels (\%) against time.}
    \label{fig:slms.sl}
  \end{figure}

\end{example}

\section{Conclusions}
\label{sec:conclusions}

Future electricity systems may well require extremely high volumes of
energy storage with a mixture of storage technologies.  This paper has
studied the societal problems of scheduling and dimensioning such
storage, with the scheduling objective of minimising total unserved
energy over time, and the dimensioning objective of doing as
economically as possible.  We have identified properties of optimal
scheduling policies and have argued that a value-function (dynamic
programming) based approach is theoretically optimal.  We have further
shown that the optimal scheduling problem to be solved at each
successive point in time reduces, to a good approximation, to a linear
programme with a particularly simple solution.

We have been particularly concerned to develop \emph{non-anticipatory}
scheduling policies---i.e. policies which do not require the use of
\emph{foresight}---which are robust and suitable for real-time
implementation, and have demonstrated their success in practical
application.  Such policies also permit scheduling over arbitrarily
long periods of time without undue numerical complexity.  However,
there are very occasional situations in which a reliable forecast of,
for example, a prolonged energy drought would make it sensible to
modify these scheduling policies so as to maximally conserve energy.

We have considered the practical application of the above theory to
future GB energy storage needs, and shown, informally, how it may be
used for the dimensioning of heterogeneous storage technologies.
Notably, we have shown that the joint management of such technologies
may greatly reduce overall costs (though the latter are as yet very
uncertain), and we have indicated why similar very large savings are
to be expected in other systems.

We have not formally considered the modelling and analysis of network
constraints.  To do so, it would be necessary to identify storage
locations with respect to the network.  The effect of such constraints
on the model of the present paper would be to add further linear
constraints (in addition to~\eqref{eq:3}) on the input and output
rates of the stores.  Proposition~\ref{proposition:greedy} would
continue to hold, with obvious modifications to the proof.  Further
the general theory given in Section~\ref{sec:nature-optim-solut}, in
particular the value-function based approach to optimal scheduling
would continue to be applicable---with some modification required to
Proposition~\ref{proposition:lp}.

Nor have we considered how to effect such storage dimensioning and
management within a \emph{market} environment in which storage is
privately owned and operated by players each seeking to optimise their
own returns.  It seems likely that, under such circumstances, the
effective use of storage would require management over extended
periods of time by the electricity system operator and that
contractual arrangements, including the possible introduction of
storage capacity markets, would have to be such as to make this
possible (see~\cite{ZWD2022} for how this might be done).

\backmatter

\bmhead{Acknowledgments} The author gratefully acknowledges support by
Towards Turing 2.0 under the EPSRC Grant EP/W037211/1 and by the Alan
Turing Institute.  The author would also like to thank the Isaac
Newton Institute for Mathematical Sciences for support during the
programme Mathematics of Energy Systems
(\href{https://www.newton.ac.uk/event/mes/}{https://www.newton.ac.uk/event/mes/}),
when early work on this paper was undertaken.  He is grateful to Tony
Roulstone and Paul Cosgrove of the University of Cambridge for many
helpful discussions on GB storage needs and for making available all
the data used in the analysis of this paper.  He is similarly grateful
to Chris Dent of the University of Edinburgh for assistance with the
linear programming computations.  He also thanks many other
colleagues, notably Simon Tindemans of Delft University of Technology,
Frank Kelly of the University of Cambridge and James Cruise, for wider
discussions on the management of energy storage.  Finally, he is most
grateful to the reviewers for many insightful comments and suggestions
for improvements.

\bmhead{Conflict of interest}
The author declares that he has no conflict of interest.

\begin{appendices}

\section{Proofs}

\begin{proof}[Proof of Proposition~\ref{proposition:greedy}]
  Given any feasible policy, we show how, for each successive
  time~$t$, the policy may be modified at each time $t'\ge t$ in such
  a way that the policy becomes greedy at the time~$t$ and remains
  feasible at at each time $t'\ge t$ (as well as at times prior to
  $t$), and further continues to serve at least as much energy in
  total to each successive time.  Iterative application of this
  procedure over successive times~$t$ then finally yields a policy
  which is feasible and greedy at all times and which continues to
  serve at least as much energy in total to each successive time.
  (Thus, at any time~$t'$, the \emph{final} modification to the
  original policy is obtained by a succession of the above
  modifications associated with the successive times~$t\le t'$.)

  Suppose that, immediately prior and immediately subsequent to the
  modification associated with the time~$t$ (which affects the storage
  rates and levels for those times $t'\ge t$), the storage rates are
  defined, for each time~$t'$, respectively by
  $r(t') = (r_i(t'),\,i\in S)$ and
  $\hat r(t') = (\hat r_i(t'),\,i\in S)$, with the corresponding store
  levels being given respectively by $s(t') = (s_i(t'),\,i\in S)$ and
  $\hat s(t') = (\hat s_i(t'),\,i\in S)$, and with the \emph{total}
  unserved energy to each successive time $t'\ge t$ being given
  respectively by $\U(t)$ and $\hat{\U}(t')$ as defined
  by~\eqref{eq:7}.  Then the modification associated with the time~$t$
  is defined as follows.
  \begin{compactenum}[1.]
  \item If $\re(t) \ge 0$, increase (if necessary) the rates
    $(r_i(t),\,i\in S)$, at which energy is supplied to the stores at
    time~$t$ to $(\hat r_i(t),\,i\in S)$, so that the policy becomes
    greedy at time~$t$ while remaining feasible at that time.  Note
    that the effect of this is to increase (weakly) the store levels
    at time~$t$ so that $\hat s_i(t) \ge s_i(t)$, $i\in S$.  For
    times~$t' > t$ and for each $i\in S$, set
    $\hat r_i(t') = \min(r_i(t'), E_i - s_i(t'-1))$.  Then the
    modified policy remains feasible and it is clear, by induction,
    that $\hat s_i(t') \ge s_i(t')$ for all $i\in S$ and for all
    $t'\ge t$.  Further, since $\re(t) \ge 0$ there is no unserved
    energy at time~$t$ and since, for $t'>t$ such that $\re(t')<0$, we
    have $\hat r_i(t') \le r_i(t)$, $i\in S$ (implying,
    from~\eqref{eq:4}, that the unserved energy $-u(t')$ does not
    increase) it follows that the total unserved energy to each
    successive time~$t'\ge t$ does not increase.
  \item If $\re(t) < 0$, reduce (if necessary) the rates
    $(r_i(t),\,i\in S)$, to $(\hat r_i(t),\,i\in S)$, so that the
    policy becomes greedy at time~$t$ while remaining feasible at that
    time.  For times~$t' > t$ and for each $i\in S$, set

    \begin{equation}
      \label{eq:11}
      \hat r_i(t') = \max(r_i(t'), - s_i(t'-1)).
    \end{equation}

    Then the modified policy remains feasible at each time~$t'>t$.  We
    show by induction that, for all $t' \ge t$,

    \begin{equation}
      \label{eq:12}
      \U(t') - \hat{\U}(t') \ge \sum_{i\in S} (s_i(t') - \hat s_i(t')).
    \end{equation}

    For $t' = t$, it is immediate from the
    definitions~\eqref{eq:1},~\eqref{eq:4} and~\eqref{eq:7} (and since
    $\re(t) < 0$), that~\eqref{eq:12} holds with equality.  For
    $t'>t$, assume the result~\eqref{eq:12} is true with $t'$ replaced
    by $t'-1$; we consider two cases:
    \begin{compactitem}[-]
    \item if $\re(t') \ge 0$, then there is no unserved energy at
      time~$t'$ under any feasible policy, so that the left side
      of~\eqref{eq:12} remains unchanged between times $t'-1$ and
      $t'$, while, from~\eqref{eq:11}, the right side of~\eqref{eq:12}
      decreases (weakly) between times $t'-1$ and $t'$; thus the
      inequality~\eqref{eq:12} continues to hold at time~$t'$;
    \item if $\re(t') < 0$, then, between times $t'-1$ and~$t'$, both
      the right and left sides of~\eqref{eq:12} increase by
      $r_i(t') - \hat r_i(t')$, so that~\eqref{eq:12} again continues
      to hold at time~$t'$.
    \end{compactitem}
    It also follows by induction, using~\eqref{eq:11}, that
    $\hat s_i(t') \le s_i(t')$ for all $i\in S$ and $t'\ge t$.  Hence,
    from~\eqref{eq:12}, it again follows that, under the modification
    associated with the time~$t$, the total unserved energy to each
    successive time~$t'\ge t$ does not increase.
  \end{compactenum}
  
  To show the second assertion of the proposition, observe that, under
  the above construction, the greedy policy finally associated with
  each time~$t'$ is defined entirely by the residual energy
  process~$(\re(t),\,t\le t')$ up to and including that time.
\end{proof}

\begin{proof}[Proof of Proposition~\ref{proposition:lp}]

  For each time~$t$, let $\hat r(t) = (\hat r_i(t), \, i\in S)$ be the
  vector of rates determined by the algorithm of the proposition, and
  let $\hat u(t)$ be the corresponding imbalance given
  by~\eqref{eq:4}.  It follows from
  Proposition~\ref{proposition:greedy} that, when the objective is the
  minimisation of total unserved energy over time, it is sufficient to
  consider greedy policies.  Further, for such policies, at those
  times~$t$ such that the residual energy~$\re(t) \ge 0$ the spilled
  energy $u(t)$ is minimised, and at those times~$t$ such that the
  residual energy~$\re(t) < 0$ the unserved energy $-u(t)$ is
  minimised.  It is clear that, at each time~$t$, the
  imbalance~$\hat u(t)$ defined by the above algorithm achieves this
  minimisation in either case.  Thus the problem of choosing, at each
  successive time~$t$, a vector~$r(t)$ of feasible rates so as to
  maximise the expression given by~\eqref{eq:8} reduces to that of
  choosing such a vector~$r(t)$ so as to
  maximise~$\sum_{i\in S}v^t_i(s)r_i(t)$ (where, again the state
  vector $s = s(t-1)$) subject to the additional constraint that the
  corresponding imbalance~$u(t)$ defined by~\eqref{eq:4} is equal to
  $\hat u(t)$.

  Assume, for the moment, that, at the given time~$t$, the ordering of
  states by their charging or discharging priorities is in each case
  unique, i.e. that we do \emph{not} have
  $\eta_i v^t_i(s) = \eta_j v^t_j(s)$ for any $i,j\in S$ or
  $v^t_i(s) = v^t_j(s)$ for any $i,j\in S$.  Then the above vector of
  rates $\hat r(t) = (\hat r_i(t), \, i\in S)$ determined by the given
  algorithm is unique.  Let $r(t) = \bar r(t)$ be the (or any) vector
  of rates which maximises~$\sum_{i\in S}v^t_i(s)r_i(t)$ subject to
  the corresponding imbalance~$\bar u(t)$ being equal to $\hat u(t)$
  as required above.  Let $S_+ = \{i\in S\colon \bar r_i(t) > 0\}$ and
  let $S_- = \{i\in S\colon \bar r_i(t) < 0\}$.  Then the rate
  vector~$\bar r(t)$ satisfies the following four conditions, in each
  case since otherwise the above objective function
  $\sum_{i\in S}v^t_i(s)r_i(t)$ could clearly be increased, while
  maintaining the given imbalance constraint $\bar u(t) = \hat u(t)$:
  \begin{compactenum}[1.]
  \item\label{c:1}%
    subject to the constraint that the total amount charged to the
    stores is as given by $\sum_{i\in S_+}\bar r_i(t)$, both~$S_+$ and
    the individual rates~$\bar r_i(t)$, $i\in S_+$, are as determined
    by the store charging priorities defined by the proposition;
  \item\label{c:2}%
    similarly, subject to the constraint that the total amount
    discharged by the stores is as given by
    $-\sum_{i\in S_-}\bar r_i(t)$, both~$S_-$ and the individual
    rates~$\bar r_i(t)$, $i\in S_-$, are as determined by the store
    discharging priorities defined by the proposition;
  \item\label{c:3}%
    the condition~\eqref{eq:9} is satisfied for all $i\in S_-$,
    $j\in S_+$:
  \item\label{c:4}%
    there are no pairs of stores $i,j \in
    S$ satisfying~\eqref{eq:9} such that it is possible to improve the
    solution $\bar r(t)$ by (further) cross-charging from~$i$ to~$j$.
  \end{compactenum}
  It is now easy to see that the above conditions~\ref{c:1}--\ref{c:4}
  are sufficient to ensure that $\bar r(t)$ is precisely as determined
  by algorithm, i.e.\ that $\bar r(t) = \hat r(t)$.
   
  In the event that, at the given time~$t$, the ordering of states by
  either their charging or discharging priorities is not unique (and
  so $\hat r(t)$ is not unique), it is easy to see that $\bar r(t)$,
  defined as above, may be adjusted if necessary---while continuing to
  maximise $\sum_{i\in S}v^t_i(s)r_i(t)$ subject to the given
  imbalance constraint---so that we again have
  $\bar r(t) = \hat r(t)$: one standard way to do this is to perturb
  $v^t_i(s)$, $i\in S$, infinitesimally so that the given $\hat r(t)$
  becomes the unique solution of the scheduling algorithm, then let
  $\bar r(t)$ solve the given constrained maximisation problem as
  above, and then finally allow the perturbation to tend to zero to
  obtain the required result.
\end{proof}

\end{appendices}

\bibliography{storage_refs_all}


\begin{thebibliography}{39}
\ifx \bisbn   \undefined \def \bisbn  #1{ISBN #1}\fi
\ifx \binits  \undefined \def \binits#1{#1}\fi
\ifx \bauthor  \undefined \def \bauthor#1{#1}\fi
\ifx \batitle  \undefined \def \batitle#1{#1}\fi
\ifx \bjtitle  \undefined \def \bjtitle#1{#1}\fi
\ifx \bvolume  \undefined \def \bvolume#1{\textbf{#1}}\fi
\ifx \byear  \undefined \def \byear#1{#1}\fi
\ifx \bissue  \undefined \def \bissue#1{#1}\fi
\ifx \bfpage  \undefined \def \bfpage#1{#1}\fi
\ifx \blpage  \undefined \def \blpage #1{#1}\fi
\ifx \burl  \undefined \def \burl#1{\textsf{#1}}\fi
\ifx \doiurl  \undefined \def \doiurl#1{\url{https://doi.org/#1}}\fi
\ifx \betal  \undefined \def \betal{\textit{et al.}}\fi
\ifx \binstitute  \undefined \def \binstitute#1{#1}\fi
\ifx \binstitutionaled  \undefined \def \binstitutionaled#1{#1}\fi
\ifx \bctitle  \undefined \def \bctitle#1{#1}\fi
\ifx \beditor  \undefined \def \beditor#1{#1}\fi
\ifx \bpublisher  \undefined \def \bpublisher#1{#1}\fi
\ifx \bbtitle  \undefined \def \bbtitle#1{#1}\fi
\ifx \bedition  \undefined \def \bedition#1{#1}\fi
\ifx \bseriesno  \undefined \def \bseriesno#1{#1}\fi
\ifx \blocation  \undefined \def \blocation#1{#1}\fi
\ifx \bsertitle  \undefined \def \bsertitle#1{#1}\fi
\ifx \bsnm \undefined \def \bsnm#1{#1}\fi
\ifx \bsuffix \undefined \def \bsuffix#1{#1}\fi
\ifx \bparticle \undefined \def \bparticle#1{#1}\fi
\ifx \barticle \undefined \def \barticle#1{#1}\fi
\bibcommenthead
\ifx \bconfdate \undefined \def \bconfdate #1{#1}\fi
\ifx \botherref \undefined \def \botherref #1{#1}\fi
\ifx \url \undefined \def \url#1{\textsf{#1}}\fi
\ifx \bchapter \undefined \def \bchapter#1{#1}\fi
\ifx \bbook \undefined \def \bbook#1{#1}\fi
\ifx \bcomment \undefined \def \bcomment#1{#1}\fi
\ifx \oauthor \undefined \def \oauthor#1{#1}\fi
\ifx \citeauthoryear \undefined \def \citeauthoryear#1{#1}\fi
\ifx \endbibitem  \undefined \def \endbibitem {}\fi
\ifx \bconflocation  \undefined \def \bconflocation#1{#1}\fi
\ifx \arxivurl  \undefined \def \arxivurl#1{\textsf{#1}}\fi
\csname PreBibitemsHook\endcsname

\bibitem[\protect\citeauthoryear{Cárdenas et~al.}{2021}]{Cardenas2021}
\begin{botherref}
\oauthor{\bsnm{Cárdenas}, \binits{B.}},
\oauthor{\bsnm{Swinfen-Styles}, \binits{L.}},
\oauthor{\bsnm{Rouse}, \binits{J.}},
\oauthor{\bsnm{Garvey}, \binits{S.D.}}:
Short-, medium-, and long-duration energy storage in a 100\% renewable
  electricity grid: a {UK} case study.
Energies
\textbf{14}(24)
(2021)
\doiurl{10.3390/en14248524}
\end{botherref}
\endbibitem

\bibitem[\protect\citeauthoryear{Roulstone and
  Cosgrove}{2022}]{RoulstoneCosgrove2022}
\begin{botherref}
\oauthor{\bsnm{Roulstone}, \binits{T.}},
\oauthor{\bsnm{Cosgrove}, \binits{P.}}:
UK Multi-year Renewable Energy Systems with Storage - Cost Investigation
(2022).
\doiurl{10.13140/RG.2.2.33695.43689}
\end{botherref}
\endbibitem

\bibitem[\protect\citeauthoryear{{Royal Society}}{2023a}]{RoyalSociety2023}
\begin{botherref}
\oauthor{\bsnm{{Royal Society}}}:
Large-Scale Electricity Storage
(2023).
\url{https://royalsociety.org/news-resources/projects/low-carbon-energy-programme/large-scale-electricity-storage/}
\end{botherref}
\endbibitem

\bibitem[\protect\citeauthoryear{{Royal
  Society}}{2023b}]{RoyalSociety2023_supp}
\begin{botherref}
\oauthor{\bsnm{{Royal Society}}}:
Report on Large-Scale Electricity Storage: Supplementary Information
(2023).
\url{https://royalsociety.org/-/media/policy/projects/large-scale-electricity-storage/large-scale-electrricity-storage-report---supplementary-information.pdf}
\end{botherref}
\endbibitem

\bibitem[\protect\citeauthoryear{Ruhnau and S}{2022}]{Ruhnau2022}
\begin{barticle}
\bauthor{\bsnm{Ruhnau}, \binits{O.}},
\bauthor{\bsnm{S}, \binits{Q.}}:
\batitle{Storage requirements in a 100\% renewable electricity system: extreme
  events and inter-annual variability}.
\bjtitle{Environmental Research Letters}
\bvolume{17}(\bissue{4}),
\bfpage{044018}
(\byear{2022})
\doiurl{10.1088/1748-9326/ac4dc8}
\end{barticle}
\endbibitem

\bibitem[\protect\citeauthoryear{Shaner et~al.}{2018}]{Shaner2018}
\begin{barticle}
\bauthor{\bsnm{Shaner}, \binits{M.R.}},
\bauthor{\bsnm{Davis}, \binits{S.J.}},
\bauthor{\bsnm{Lewis}, \binits{N.S.}},
\bauthor{\bsnm{Caldeira}, \binits{K.}}:
\batitle{{Geophysical constraints on the reliability of solar and wind power in
  the United States}}.
\bjtitle{Energy \& Environmental Science}
\bvolume{11}(\bissue{4}),
\bfpage{914}--\blpage{925}
(\byear{2018})
\doiurl{10.1039/C7EE03029K}
\end{barticle}
\endbibitem

\bibitem[\protect\citeauthoryear{Tong et~al.}{2021}]{Tong2021}
\begin{barticle}
\bauthor{\bsnm{Tong}, \binits{D.}},
\bauthor{\bsnm{Farnham}, \binits{D.J.}},
\bauthor{\bsnm{Duan}, \binits{L.}},
\bauthor{\bsnm{Zhang}, \binits{Q.}},
\bauthor{\bsnm{Lewis}, \binits{N.S.}},
\bauthor{\bsnm{Caldeira}, \binits{K.}},
\bauthor{\bsnm{Davis}, \binits{S.J.}}:
\batitle{Geophysical constraints on the reliability of solar and wind power
  worldwide}.
\bjtitle{Nature Communications}
\bvolume{12}(\bissue{1}),
\bfpage{6146}
(\byear{2021})
\doiurl{10.1038/s41467-021-26355-z}
\end{barticle}
\endbibitem

\bibitem[\protect\citeauthoryear{Cebulla et~al.}{2018}]{CHENM2018}
\begin{barticle}
\bauthor{\bsnm{Cebulla}, \binits{F.}},
\bauthor{\bsnm{Haas}, \binits{J.}},
\bauthor{\bsnm{Eichman}, \binits{J.}},
\bauthor{\bsnm{Nowak}, \binits{W.}},
\bauthor{\bsnm{Mancarella}, \binits{P.}}:
\batitle{{How much electrical energy storage do we need? A synthesis for the
  U.S., Europe, and Germany}}.
\bjtitle{Journal of Cleaner Production}
\bvolume{181},
\bfpage{449}--\blpage{459}
(\byear{2018})
\doiurl{10.1016/j.jclepro.2018.01.144}
\end{barticle}
\endbibitem

\bibitem[\protect\citeauthoryear{Blanco and Faaij}{2018}]{BlancoFaaij2018}
\begin{barticle}
\bauthor{\bsnm{Blanco}, \binits{H.}},
\bauthor{\bsnm{Faaij}, \binits{A.}}:
\batitle{A review at the role of storage in energy systems with a focus on
  power to gas and long-term storage}.
\bjtitle{Renewable and Sustainable Energy Reviews}
\bvolume{81},
\bfpage{1049}--\blpage{1086}
(\byear{2018})
\doiurl{10.1016/j.rser.2017.07.062}
\end{barticle}
\endbibitem

\bibitem[\protect\citeauthoryear{Cosgrove et~al.}{2023}]{CRZ2023}
\begin{barticle}
\bauthor{\bsnm{Cosgrove}, \binits{P.}},
\bauthor{\bsnm{Roulstone}, \binits{T.}},
\bauthor{\bsnm{Zachary}, \binits{S.}}:
\batitle{Intermittency and periodicity in net-zero renewable energy systems
  with storage}.
\bjtitle{Renewable Energy}
\bvolume{212},
\bfpage{299}--\blpage{307}
(\byear{2023})
\doiurl{10.1016/j.renene.2023.04.135}
\end{barticle}
\endbibitem

\bibitem[\protect\citeauthoryear{Staffell and
  Pfenninger}{2016}]{StaffellPfenninger2016}
\begin{barticle}
\bauthor{\bsnm{Staffell}, \binits{I.}},
\bauthor{\bsnm{Pfenninger}, \binits{S.}}:
\batitle{Using bias-corrected reanalysis to simulate current and future wind
  power output}.
\bjtitle{Energy}
\bvolume{114},
\bfpage{1224}--\blpage{1239}
(\byear{2016})
\doiurl{10.1016/j.energy.2016.08.068}
\end{barticle}
\endbibitem

\bibitem[\protect\citeauthoryear{Sioshansi et~al.}{2009}]{SDJW}
\begin{barticle}
\bauthor{\bsnm{Sioshansi}, \binits{R.}},
\bauthor{\bsnm{Denholm}, \binits{P.}},
\bauthor{\bsnm{Jenkin}, \binits{T.}},
\bauthor{\bsnm{Weiss}, \binits{J.}}:
\batitle{Estimating the value of electricity storage in {PJM}: Arbitrage and
  some welfare effects}.
\bjtitle{Energy Economics}
\bvolume{31}(\bissue{2}),
\bfpage{269}--\blpage{277}
(\byear{2009})
\doiurl{10.1016/j.eneco.2008.10.005}
\end{barticle}
\endbibitem

\bibitem[\protect\citeauthoryear{Cruise and Zachary}{2018}]{CZ}
\begin{bchapter}
\bauthor{\bsnm{Cruise}, \binits{J.R.}},
\bauthor{\bsnm{Zachary}, \binits{S.}}:
\bctitle{The optimal control of storage for arbitrage and buffering, with
  energy applications}.
In: \bbtitle{Renewable Energy: Forecasting and Risk Management},
pp. \bfpage{209}--\blpage{227}.
\bpublisher{Springer},
\blocation{Cham, Switzerland}
(\byear{2018}).
\doiurl{10.1007/978-3-319-99052-1_11}
\end{bchapter}
\endbibitem

\bibitem[\protect\citeauthoryear{Denholm et~al.}{2010}]{DEKM}
\begin{botherref}
\oauthor{\bsnm{Denholm}, \binits{P.}},
\oauthor{\bsnm{Ela}, \binits{E.}},
\oauthor{\bsnm{Kirby}, \binits{B.}},
\oauthor{\bsnm{Milligan}, \binits{M.R.}}:
The role of energy storage with renewable electricity generation.
Technical Report NREL/TP-6A2-47187,
National Renewable Energy Laboratory
(2010).
\url{https://www.nrel.gov/docs/fy10osti/47187.pdf}
\end{botherref}
\endbibitem

\bibitem[\protect\citeauthoryear{Gast et~al.}{2012}]{GTL}
\begin{bchapter}
\bauthor{\bsnm{Gast}, \binits{N.G.}},
\bauthor{\bsnm{Tomozei}, \binits{D.C.}},
\bauthor{\bsnm{{Le Boudec}}, \binits{J.-Y.}}:
\bctitle{Optimal storage policies with wind forecast uncertainties}.
In: \bbtitle{Greenmetrics 2012}
(\byear{2012}).
\doiurl{10.1145/2425248.2425255} .
\bcomment{Imperial College, London, UK}
\end{bchapter}
\endbibitem

\bibitem[\protect\citeauthoryear{Pudjianto et~al.}{2014}]{PADS}
\begin{barticle}
\bauthor{\bsnm{Pudjianto}, \binits{D.}},
\bauthor{\bsnm{Aunedi}, \binits{M.}},
\bauthor{\bsnm{Djapic}, \binits{P.}},
\bauthor{\bsnm{Strbac}, \binits{G.}}:
\batitle{Whole-systems assessment of the value of energy storage in low-carbon
  electricity systems}.
\bjtitle{IEEE Transactions on Smart Grid}
\bvolume{5},
\bfpage{1098}--\blpage{1109}
(\byear{2014})
\doiurl{10.1109/TSG.2013.2282039}
\end{barticle}
\endbibitem

\bibitem[\protect\citeauthoryear{Weitzel and Glock}{2018}]{WEITZEL2018582}
\begin{barticle}
\bauthor{\bsnm{Weitzel}, \binits{T.}},
\bauthor{\bsnm{Glock}, \binits{C.H.}}:
\batitle{Energy management for stationary electric energy storage systems: A
  systematic literature review}.
\bjtitle{European Journal of Operational Research}
\bvolume{264}(\bissue{2}),
\bfpage{582}--\blpage{606}
(\byear{2018})
\doiurl{10.1016/j.ejor.2017.06.052}
\end{barticle}
\endbibitem

\bibitem[\protect\citeauthoryear{{National Grid plc}}{2022}]{NGECR}
\begin{botherref}
\oauthor{\bsnm{{National Grid plc}}}:
{E}lectricity {C}apacity {R}eports
(2022).
\url{https://www.emrdeliverybody.com/CM/Capacity.aspx}
\end{botherref}
\endbibitem

\bibitem[\protect\citeauthoryear{Khan et~al.}{2018}]{KHAN201839}
\begin{barticle}
\bauthor{\bsnm{Khan}, \binits{A.S.M.}},
\bauthor{\bsnm{Verzijlbergh}, \binits{R.A.}},
\bauthor{\bsnm{Sakinci}, \binits{O.C.}},
\bauthor{\bsnm{Vries}, \binits{L.J.D.}}:
\batitle{How do demand response and electrical energy storage affect (the need
  for) a capacity market?}
\bjtitle{Applied Energy}
\bvolume{214},
\bfpage{39}--\blpage{62}
(\byear{2018})
\doiurl{10.1016/j.apenergy.2018.01.057}
\end{barticle}
\endbibitem

\bibitem[\protect\citeauthoryear{Sioshansi et~al.}{2014}]{Sioshansi2014}
\begin{barticle}
\bauthor{\bsnm{Sioshansi}, \binits{R.}},
\bauthor{\bsnm{Madaeni}, \binits{S.H.}},
\bauthor{\bsnm{Denholm}, \binits{P.}}:
\batitle{A dynamic programming approach to estimate the capacity value of
  energy storage}.
\bjtitle{IEEE Transactions on Power Systems}
\bvolume{29}(\bissue{1}),
\bfpage{395}--\blpage{403}
(\byear{2014})
\doiurl{10.1109/TPWRS.2013.2279839}
\end{barticle}
\endbibitem

\bibitem[\protect\citeauthoryear{Zhou et~al.}{2015}]{ZHOU201512}
\begin{barticle}
\bauthor{\bsnm{Zhou}, \binits{Y.}},
\bauthor{\bsnm{Mancarella}, \binits{P.}},
\bauthor{\bsnm{Mutale}, \binits{J.}}:
\batitle{Modelling and assessment of the contribution of demand response and
  electrical energy storage to adequacy of supply}.
\bjtitle{Sustainable Energy, Grids and Networks}
\bvolume{3},
\bfpage{12}--\blpage{23}
(\byear{2015})
\doiurl{10.1016/j.segan.2015.06.001}
\end{barticle}
\endbibitem

\bibitem[\protect\citeauthoryear{Choudhury}{2022}]{Choudhury2022}
\begin{botherref}
\oauthor{\bsnm{Choudhury}, \binits{S.}}:
Review of energy storage system technologies integration to microgrid: Types,
  control strategies, issues, and future prospects.
Journal of Energy Storage
\textbf{48}
(2022)
\doiurl{10.1016/j.est.2022.103966}
\end{botherref}
\endbibitem

\bibitem[\protect\citeauthoryear{Chamandoust et~al.}{2019}]{CDHB2019}
\begin{botherref}
\oauthor{\bsnm{Chamandoust}, \binits{H.}},
\oauthor{\bsnm{Derakhshan}, \binits{G.}},
\oauthor{\bsnm{Hakimi}, \binits{S.M.}},
\oauthor{\bsnm{Bahramara}, \binits{S.}}:
Tri-objective optimal scheduling of smart energy hub system with schedulable
  loads.
Journal of Cleaner Production
\textbf{236}
(2019)
\doiurl{10.1016/j.jclepro.2019.07.059}
\end{botherref}
\endbibitem

\bibitem[\protect\citeauthoryear{Chamandoust et~al.}{2020}]{CDHB2020}
\begin{botherref}
\oauthor{\bsnm{Chamandoust}, \binits{H.}},
\oauthor{\bsnm{Derakhshan}, \binits{G.}},
\oauthor{\bsnm{Hakimi}, \binits{S.M.}},
\oauthor{\bsnm{Bahramara}, \binits{S.}}:
Tri-objective scheduling of residential smart electrical distribution grids
  with optimal joint of responsive loads with renewable energy sources.
Journal of Energy Storage
\textbf{27}
(2020)
\doiurl{10.1016/j.est.2019.101112}
\end{botherref}
\endbibitem

\bibitem[\protect\citeauthoryear{{Massachusetts Institute of
  Technology}}{2022}]{MIT2022}
\begin{botherref}
\oauthor{\bsnm{{Massachusetts Institute of Technology}}}:
{T}he {F}uture of {E}nergy {S}torage
(2022).
\url{https://energy.mit.edu/wp-content/uploads/2022/05/The-Future-of-Energy-Storage.pdf}
\end{botherref}
\endbibitem

\bibitem[\protect\citeauthoryear{{Exemplar Systems}}{}]{PLEXOS}
\begin{botherref}
\oauthor{\bsnm{{Exemplar Systems}}}:
PLEXOS Energy Analytics and Decision Platform for All Systems.
\url{https://www.energyexemplar.com/plexos}
\end{botherref}
\endbibitem

\bibitem[\protect\citeauthoryear{{MIT Energy Initiative}}{}]{GenX}
\begin{botherref}
\oauthor{\bsnm{{MIT Energy Initiative}}}:
GenX.
\url{https://energy.mit.edu/genx/}
\end{botherref}
\endbibitem

\bibitem[\protect\citeauthoryear{Cosgrove and
  Roulstone}{2021}]{CosgroveRoulstone2021}
\begin{botherref}
\oauthor{\bsnm{Cosgrove}, \binits{P.}},
\oauthor{\bsnm{Roulstone}, \binits{T.}}:
Working Paper on Energy Storage -- Multi-Year Studies
(2021).
\doiurl{10.13140/RG.2.2.12555.41760}
\end{botherref}
\endbibitem

\bibitem[\protect\citeauthoryear{Edwards et~al.}{2017}]{ESDT}
\begin{barticle}
\bauthor{\bsnm{Edwards}, \binits{G.}},
\bauthor{\bsnm{Sheehy}, \binits{S.}},
\bauthor{\bsnm{Dent}, \binits{C.J.}},
\bauthor{\bsnm{Troffaes}, \binits{M.C.M.}}:
\batitle{Assessing the contribution of nightly rechargeable grid-scale storage
  to generation capacity adequacy}.
\bjtitle{Sustainable Energy, Grids and Networks}
\bvolume{12},
\bfpage{69}--\blpage{81}
(\byear{2017})
\doiurl{10.1016/J.SEGAN.2017.09.005}
\end{barticle}
\endbibitem

\bibitem[\protect\citeauthoryear{Cruise and Zachary}{2018}]{CZ2018}
\begin{botherref}
\oauthor{\bsnm{Cruise}, \binits{J.R.}},
\oauthor{\bsnm{Zachary}, \binits{S.}}:
Optimal scheduling of energy storage resources
(2018).
\url{http://arxiv.org/abs/1808.05901}
\end{botherref}
\endbibitem

\bibitem[\protect\citeauthoryear{Nash and Weber}{1978}]{Nash1978}
\begin{botherref}
\oauthor{\bsnm{Nash}, \binits{P.}},
\oauthor{\bsnm{Weber}, \binits{R.}}:
{A simple optimizing model for reservoir control}.
Technical report,
University of Cambridge,
Cambridge
(1978)
\end{botherref}
\endbibitem

\bibitem[\protect\citeauthoryear{{Evans} et~al.}{2018}]{ETA-pscc}
\begin{bchapter}
\bauthor{\bsnm{{Evans}}, \binits{M.}},
\bauthor{\bsnm{{Tindemans}}, \binits{S.H.}},
\bauthor{\bsnm{{Angeli}}, \binits{D.}}:
\bctitle{Robustly maximal utilisation of energy-constrained distributed
  resources}.
In: \bbtitle{2018 {P}ower {S}ystems {C}omputation {C}onference {(PSCC})},
pp. \bfpage{1}--\blpage{7}
(\byear{2018}).
\doiurl{10.23919/PSCC.2018.8443058}
\end{bchapter}
\endbibitem

\bibitem[\protect\citeauthoryear{Zachary et~al.}{2021}]{ZTECA}
\begin{botherref}
\oauthor{\bsnm{Zachary}, \binits{S.}},
\oauthor{\bsnm{Tindemans}, \binits{S.H.}},
\oauthor{\bsnm{Evans}, \binits{M.P.}},
\oauthor{\bsnm{Cruise}, \binits{J.R.}},
\oauthor{\bsnm{Angeli}, \binits{D.}}:
Scheduling of energy storage.
Philosophical Transactions of the Royal Society A
\textbf{397}
(2021)
\doiurl{10.1098/rsta.2019.0435}
\end{botherref}
\endbibitem

\bibitem[\protect\citeauthoryear{Bertsekas}{2012}]{Bertsekas2012}
\begin{bbook}
\bauthor{\bsnm{Bertsekas}, \binits{D.}}:
\bbtitle{Dynamic Programming and Optimal Control: Volume I}.
\bsertitle{Athena scientific optimization and computation series}.
\bpublisher{Athena Scientific},
\blocation{Belmont, Massachutsetts}
(\byear{2012})
\end{bbook}
\endbibitem

\bibitem[\protect\citeauthoryear{Durrett}{2019}]{Durrett2019probability}
\begin{bbook}
\bauthor{\bsnm{Durrett}, \binits{R.}}:
\bbtitle{Probability: Theory and Examples}.
\bsertitle{Cambridge Series in Statistical and Probabilistic Mathematics}.
\bpublisher{Cambridge University Press},
\blocation{Cambridge}
(\byear{2019})
\end{bbook}
\endbibitem

\bibitem[\protect\citeauthoryear{Roulstone and
  Cosgrove}{2021}]{RoulstoneCosgrove2021}
\begin{botherref}
\oauthor{\bsnm{Roulstone}, \binits{T.}},
\oauthor{\bsnm{Cosgrove}, \binits{P.}}:
Working Paper -- {UK} Multi-year Energy Storage Systems Cost Investigation
(2021).
\doiurl{10.13140/RG.2.2.34054.29766}
\end{botherref}
\endbibitem

\bibitem[\protect\citeauthoryear{{UK Committee on Climate
  Change}}{2019}]{UKCCC2019}
\begin{botherref}
\oauthor{\bsnm{{UK Committee on Climate Change}}}:
{N}et {Z}ero -- {T}he {UK}'s contribution to stopping global warming
(2019).
\url{https://www.theccc.org.uk/publication/net-zero-the-uks-contribution-to-stopping-global-warming/}
\end{botherref}
\endbibitem

\bibitem[\protect\citeauthoryear{{Gallo Cassarino}
  et~al.}{2018}]{GalloCassarino2018}
\begin{barticle}
\bauthor{\bsnm{{Gallo Cassarino}}, \binits{T.}},
\bauthor{\bsnm{Sharp}, \binits{E.}},
\bauthor{\bsnm{Barrett}, \binits{M.}}:
\batitle{{The impact of social and weather drivers on the historical
  electricity demand in Europe}}.
\bjtitle{Applied Energy}
\bvolume{229},
\bfpage{176}--\blpage{185}
(\byear{2018})
\doiurl{10.1016/J.APENERGY.2018.07.108}
\end{barticle}
\endbibitem

\bibitem[\protect\citeauthoryear{Zachary et~al.}{2022}]{ZWD2022}
\begin{botherref}
\oauthor{\bsnm{Zachary}, \binits{S.}},
\oauthor{\bsnm{Wilson}, \binits{A.}},
\oauthor{\bsnm{Dent}, \binits{C.J.}}:
The integration of variable generation and storage into electricity capacity
  markets.
The Energy Journal
\textbf{43}
(2022)
\doiurl{10.5547/01956574.43.4.szac}
\end{botherref}
\endbibitem

\end{thebibliography}

\end{document}